\newcommand{\sysn}{\left\{\begin{array}{rcl}}
\newcommand{\sysk}{\end{array}\right.}
\newtheorem{theorem}{Theorem}[section]
\newtheorem{lemma}[theorem]{Lemma}
\theoremstyle{example}
\newtheorem{example}[theorem]{Example}
\newtheorem{proposition}[theorem]{Proposition}
\theoremstyle{definition}
\newtheorem{definition}[theorem]{Definition}
\newtheorem{remark}[theorem]{Remark}
\journal{...}
\begin{document}

\begin{frontmatter}



\title{The Menger and projective Menger properties of function spaces with the set-open topology}


\author{Alexander V. Osipov}


\ead{OAB@list.ru}


\address{Krasovskii Institute of Mathematics and Mechanics, Ural Federal
 University,

 Ural State University of Economics, 620219, Yekaterinburg, Russia}

\begin{abstract}
For a Tychonoff space $X$ and a family $\lambda$ of subsets of
$X$, we denote by $C_{\lambda}(X)$ the space of all real-valued
continuous functions on $X$ with the set-open topology. In this
paper, we study the Menger and projective Menger properties of a
Hausdorff space $C_{\lambda}(X)$. Our main results state that

if $\lambda$ is a $\pi$-network of $X$, then

 (1) $C_{\lambda}(X)$ is Menger space, if and only, if $C_{\lambda}(X)$ is
$\sigma$-compact,

and, if $Y$ is a dense subset of $X$, then

(2) $C_{p}(Y\vert X)$ is projective Menger space, if and only, if
$C_{p}(Y\vert X)$ is $\sigma$-pseudocompact.
\end{abstract}

\begin{keyword}
 Menger \sep
projective Menger \sep set-open topology \sep $\sigma$-compact
   \sep $\sigma$-pseudocompact  \sep $\sigma$-bounded \sep basically
disconnected space \sep function space


\MSC[2010]   54C25 \sep 54C35  \sep 54C40 \sep 54D20

\end{keyword}

\end{frontmatter}



\section{Introduction}
\label{}

Throughout this paper $X$ will be a Tychonoff space.  Let
$\lambda$ be a family non-empty subsets of $X$, $C(X)$ the set of
all continuous real-valued function on $X$. Denote by
$C_{\lambda}(X)$ the set $C(X)$ is endowed with the $\lambda$-open
topology.
 The elements of the standard subbases of the $\lambda$-open topology
 will be denoted as follows:
 $[F,\,U]=\{f\in C(X):\ f(F)\subseteq U\}$,  where $F\in\lambda$, $U$ is an open subset  of $\mathbb
 R$. Note that if $\lambda$ consists of all
finite subsets of $X$ then the $\lambda$-open topology is equal to
the topology of pointwise convergence, that is
$C_{\lambda}(X)=C_{p}(X)$. Denote be $C_{p}(Y\vert X)=\{h\in
C_p(Y): h=f\vert_Y$ for $f\in C(X)\}$ for $Y\subset X$.

Recall that, if $X$ is a space and $\mathcal{P}$  a topological
property, we say that $X$ is $\sigma$-$\mathcal{P}$ if $X$ is the
countable union of subspaces with the property $\mathcal{P}$.

So a space $X$ is called $\sigma$-compact ($\sigma$-pseudocompact,
$\sigma$-bounded), if $X=\bigcup\limits_{i=1}^{\infty}X_i$, where
$X_i$ is a compact (pseudocompact, bounded) for every $i\in
\mathbb N$. N.V. Velichko proved that $C_p(X)$ is
$\sigma$-compact, if and only, if $X$ is finite. In \cite{tka},
V.V. Tkachuk clarified when $C_p(X)$ is $\sigma$-pseudocompact and
when $C_p(X)$ is $\sigma$-bounded, and considered similar
questions for the space $C^*_p(X)$ of bounded continuous functions
on $X$.

A space $X$ is said to be Menger \cite{hur} (or, \cite{sash}) if
for every sequence $\{\mathcal{U}_n : n\in \omega\}$ of open
covers of $X$, there are finite subfamilies $\mathcal{V}_n\subset
\mathcal{U}_n$ such that $\bigcup \{\mathcal{V}_n : n\in \omega
\}$ is a cover of $X$.

 Every $\sigma$-compact space is Menger,
and a Menger space is Lindel$\ddot{o}$f. The Menger property is
closed hereditary, and it is preserved by continuous maps. It is
well known that the Baire space $\mathbb{N}^{\mathbb{N}}$ (hence,
$\mathbb{R}^{\omega}$) is not Menger.

In \cite{arh2}, A.V. Arhangel'skii proved that $C_p(X)$ is Menger,
if and only, if $X$ is finite.
\medskip

Let $\mathcal{P}$ be a topological property. A.V. Arhangel'skii
calls $X$ {\it projectively $\mathcal{P}$} if every second
countable image of $X$ is $\mathcal{P}$. Arhangel'skii consider
projective $\mathcal{P}$ for $\mathcal{P}=\sigma$-compact,
analytic \cite{arh}, and other properties.

Lj.D.R. Ko$\check{c}$inac characterized the classical covering
properties of Menger, Rothberger, Hurewicz and Gerlits-Nagy in
term of continuous images in $\mathbb{R}^{\omega}$. The projective
selection principles were introduced and first time considered in
\cite{koc}.

\medskip

Every Menger space is projectively Menger. It is known (Theorem
2.2 in \cite{koc}) that a space is Menger, if and only, if it is
Lindel$\ddot{o}$f and projectively Menger.

Characterizations of projectively Menger spaces $X$ in terms a
selection principle restricted to countable covers by cozero sets
are given in \cite{bcm}.

In \cite{sa}, M. Sakai proved that $C_p(X)$ is projectively
Menger, if and only, if $X$ is pseudocompact and $b$-discrete.

In this paper we study the Menger property of Hausdorff space
$C_\lambda(X)$, and the projective Menger property of
$C_{p}(Y\vert X)$ where $Y$ is dense subset of $X$.

\section{Main definitions and notation}

Recall that a family $\lambda$ of non-empty subsets of a
topological space $(X,\tau)$ is called a $\pi$-network for $X$ if
for any nonempty open set $U\in\tau$ there exists $A\in \lambda$
such that $A\subset U$.

Throughout this paper, a family $\lambda$ of nonempty subsets of
the set $X$ is
  a $\pi$-network. This condition is equivalent to the space $C_{\lambda}(X)$ being a Hausdorff space \cite{nokh}.

We will also need the following assertion \cite{arkh}, \cite{arp}.

\begin{proposition}\label{pr13} If $\mathbb{I}_{\alpha}=\mathbb{I}=[0,1]$ for
$\alpha\in A$ and $Y$ is a subspace of the Tychonoff cube
$\mathbb{I}^A=\prod \{\mathbb{I}_{\alpha}: \alpha\in A\}$ which,
whatever the countable set $B\subset A$, projects under the
canonical projection $\pi_B: \mathbb{I}^A \mapsto \mathbb{I}^B$
onto the whole cube $\mathbb{I}^B=\prod\{\mathbb{I}_{\alpha} :
\alpha\in B \}$ of $\mathbb{I}^A$, then $Y$ is pseudocompact.
\end{proposition}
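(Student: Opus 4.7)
The plan is to use the classical characterization that a Tychonoff space is pseudocompact if and only if every locally finite family of non-empty open sets is finite. Assuming for contradiction that some countably infinite family $\{U_n : n \in \omega\}$ of non-empty open subsets of $Y$ is locally finite in $Y$, I would first shrink each $U_n$ to a basic open set $V_n \cap Y \subseteq U_n$, where $V_n = \pi_{S_n}^{-1}(V_n'')$ for a finite $S_n \subseteq A$ and a non-empty open $V_n'' \subseteq \mathbb{I}^{S_n}$. Setting $B := \bigcup_n S_n$ gives a countable subset of $A$, and the hypothesis applies to $B$.

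Next, the projections $W_n := \pi_B(V_n)$ form a sequence of non-empty open subsets of the compact metrizable cube $\mathbb{I}^B$. Picking $x_n \in W_n$ and extracting a convergent subsequence $x_{n_k} \to x$ in $\mathbb{I}^B$, the hypothesis yields a point $y \in Y$ with $\pi_B(y) = x$. I would then argue that $y$ witnesses the failure of local finiteness. For an arbitrary basic neighborhood $N' \cap Y$ of $y$ with $N' = \pi_T^{-1}(N''')$, $T \subseteq A$ finite, split $T = (T \cap B) \cup (T \setminus B)$. The convergence in $\mathbb{I}^B$ forces the $(T \cap B)$-coordinates of $x_{n_k}$ to converge to those of $y$, so for large $k$ the point of $\mathbb{I}^T$ whose $(T \cap B)$-coordinates are those of $x_{n_k}$ and whose $(T \setminus B)$-coordinates are those of $y$ lies in $N'''$. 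That same point lies in $\pi_T(V_{n_k})$, because $S_{n_k} \subseteq B$ means $V_{n_k}$ is unrestricted on the $(T \setminus B)$-coordinates. Hence $V_{n_k} \cap N'$ is a non-empty open subset of $\mathbb{I}^A$ depending only on the finite coordinate set $S_{n_k} \cup T$; applying the hypothesis one more time to this finite (hence countable) set yields a point of $Y$ in $V_{n_k} \cap N'$. So infinitely many $U_{n_k}$ meet the neighborhood $N' \cap Y$ of $y$, contradicting local finiteness.

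The main obstacle is the coordinate bookkeeping in the last step: the neighborhood $N'$ of $y$ may involve coordinates outside $B$ that the projection $\pi_B$ discarded. The resolution is the observation that $V_{n_k}$ is unconstrained outside $B$ by construction of $B$, so the $(T \setminus B)$-coordinates may be freely taken from those of $y$; the convergence inside the metrizable subcube $\mathbb{I}^B$ handles the $(T \cap B)$-coordinates, and a final application of the full-projection hypothesis to the finite set $S_{n_k} \cup T$ lifts the resulting open set back into $Y$.
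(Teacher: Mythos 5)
The paper itself gives no proof of Proposition~\ref{pr13}: it is quoted as a known result from Arhangel'skii and Arhangel'skii--Ponomarev, so there is no in-paper argument to compare yours against. Your proof is correct and is essentially the standard argument for this classical fact. You use the characterization that a Tychonoff space is pseudocompact iff every locally finite family of non-empty open sets is finite, shrink the sets to traces of basic open sets $V_n=\pi_{S_n}^{-1}(V_n'')$ supported on a countable set $B=\bigcup_n S_n$, find a cluster point $x$ of the projections in the compact metrizable cube $\mathbb{I}^B$, lift it to $y\in Y$ using $\pi_B(Y)=\mathbb{I}^B$, and then verify that every basic neighbourhood $\pi_T^{-1}(N''')\cap Y$ of $y$ meets infinitely many $V_{n_k}\cap Y$. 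The delicate point --- that $T$ may contain coordinates outside $B$ --- is resolved correctly: $V_{n_k}$ constrains only coordinates in $S_{n_k}\subset B$, so one may take the $(T\setminus B)$-coordinates from $y$ itself, and the resulting non-empty open set $V_{n_k}\cap N'$ depends only on the finite set $S_{n_k}\cup T$, onto whose subcube $Y$ projects surjectively by hypothesis, hence $Y$ meets it. All auxiliary facts used (local finiteness passes to the shrunken family, convergence in $\mathbb{I}^B$ is coordinatewise on the finite set $T\cap B$) are routine, so the argument is complete.
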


\begin{theorem}(Nokhrin  \cite{nokh}) \label{th111} For a Tychonoff space $X$ the following statements are
equivalent:

\begin{enumerate}

\item $C_{\lambda}(X)$ is a $\sigma$-compact;

\item $X$ is a pseudocompact, $D(X)$ is a dense $C^*$-embedded set
in $X$ and family $\lambda$ consists of all finite subsets of
$D(X)$, where $D(X)$ is an isolated points of $X$.

\end{enumerate}

\end{theorem}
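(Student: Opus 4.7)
The plan is to prove both implications, with $(1) \Rightarrow (2)$ being the substantive direction.

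For $(2) \Rightarrow (1)$, I would realize $C_\lambda(X)$ as a $\sigma$-compact subspace of a product of intervals. Define $r: C_\lambda(X) \to \mathbb{R}^{D(X)}$ by $r(f) = f|_{D(X)}$. Density of $D(X)$ makes $r$ injective, and the hypothesis that $\lambda$ consists of all finite subsets of $D(X)$ makes the subbasic sets $[F, U]$ of $C_\lambda(X)$ coincide with pullbacks of the pointwise subbasics on $\mathbb{R}^{D(X)}$; thus $r$ is a topological embedding. Pseudocompactness of $X$ forces every $f \in C(X)$ to be bounded, while $C^*$-embeddedness of the discrete space $D(X)$ yields that every bounded map $D(X) \to \mathbb{R}$ extends to a continuous function on $X$. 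Hence $r(C_\lambda(X)) = \bigcup_{n \in \mathbb{N}} [-n, n]^{D(X)}$, which is $\sigma$-compact by Tychonoff's theorem.

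For $(1) \Rightarrow (2)$, write $C_\lambda(X) = \bigcup_{n \in \mathbb{N}} K_n$ with each $K_n$ compact, and establish the four components of (2) in stages. First, every $F \in \lambda$ must be a bounded subset of $X$: if some $f_0 \in C(X)$ were unbounded on $F$, the continuous ray $\{tf_0 : t \in \mathbb{R}\}$ would intersect the open set $W_N := [F, (-N, N)]$ only at $t = 0$; pigeonholing uncountably many $tf_0$'s into a single compact $K_{n_0}$ and applying a suitable evaluation functional then contradicts the compactness of $K_{n_0}$. Second, each $F \in \lambda$ must be a finite subset of $D(X)$: an infinite $F$ allows one to construct, via Tychonoff separation on a countable discrete subsequence of $F$, a family of continuous functions realizing all binary patterns, yielding a closed copy of the Cantor cube that, when rescaled, defies the $\sigma$-compact decomposition. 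A non-isolated $x \in F$ furnishes, via the $\pi$-network hypothesis applied to a descending neighborhood base at $x$, an analogous obstruction.

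Density of $D(X)$ then follows from the $\pi$-network property combined with $\lambda$ being contained in the family of finite subsets of $D(X)$; $C^*$-embeddedness of $D(X)$ follows from the identification of the image $r(C(X))$ with the bounded functions on $D(X)$; pseudocompactness of $X$ follows from the first boundedness step together with density of $D(X)$; and the equality of $\lambda$ with the family of all finite subsets of $D(X)$ follows from the $\pi$-network requirement on $\lambda$. The principal obstacle, I expect, is the combined step of proving that elements of $\lambda$ are finite subsets of isolated points, which requires careful explicit constructions of witnessing families of continuous functions whose interaction with the $\sigma$-compact decomposition yields contradictions.
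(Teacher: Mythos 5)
The paper itself gives no proof of this theorem: it is quoted from Nokhrin \cite{nokh}, and is invoked only in the direction $(2)\Rightarrow(1)$ inside the proof of Theorem \ref{th100} (the converse direction is in effect reproved there, under the weaker hypothesis that $C_{\lambda}(X)$ is Menger, by Lemmas \ref{lem1}, \ref{lem3}, \ref{lem4}, \ref{lem5} and \ref{lem6}). Your argument for $(2)\Rightarrow(1)$ is correct and is the standard one: with $\lambda=[D(X)]^{<\omega}$ the restriction map $r(f)=f\vert_{D(X)}$ embeds $C_{\lambda}(X)$ into $\mathbb{R}^{D(X)}$ with the product topology, it is injective by density of $D(X)$, and pseudocompactness together with $C^*$-embeddedness of the discrete set $D(X)$ identifies the image with $\bigcup_{n}[-n,n]^{D(X)}$, a $\sigma$-compact set.

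Your $(1)\Rightarrow(2)$, however, has concrete gaps. First, in the boundedness step the map $t\mapsto tf_0$ is \emph{not} continuous into $C_{\lambda}(X)$ when $f_0$ is unbounded on some $F\in\lambda$ (continuity of scalar multiplication requires precisely the boundedness you are trying to prove), and point evaluations $f\mapsto f(x)$ are likewise not continuous for a general $\pi$-network $\lambda$; so neither the ``continuous ray'' nor a ``suitable evaluation functional'' is available, and the pigeonhole step by itself produces no open cover of $K_{n_0}$ without a finite subcover. Second, ``a closed copy of the Cantor cube that, when rescaled, defies the $\sigma$-compact decomposition'' cannot work as stated: Cantor cubes are compact, and no homeomorphic copy of a compact space contradicts $\sigma$-compactness; what is needed is a closed copy of a non-$\sigma$-compact space such as $\mathbb{N}^{\mathbb{N}}$, or an explicit sequence of covers, and you construct neither. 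Third, $C^*$-embeddedness of $D(X)$ is asserted to follow ``from the identification of $r(C(X))$ with the bounded functions on $D(X)$,'' but that identification is exactly what has to be proved at that stage. For comparison, the route taken in this paper for the analogous facts (under the weaker Menger hypothesis) goes through basic disconnectedness of $X$ (Lemma \ref{lem2}), the $C^*$-embeddability of countable subsets of basically disconnected spaces, and the Hurewicz--Ko$\check{c}$inac characterization of Menger via dominating images in $\mathbb{R}^{\omega}$ (Theorem \ref{koc}); none of these tools, or substitutes for them, appears in your sketch, so the hard direction is not established.
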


The closure of a set $A$ will be denoted by $\overline{A}$ (or
$cl(A)$); the symbol
 $\varnothing$ stands for the empty set. As usual, $f(A)$ and $f^{-1}(A)$ are the image and
 the complete preimage of the set $A$ under the mapping~$f$,
 respectively.

 A subset $A$ of a space $X$ is said to be {\it
 bounded} in $X$ if for every continuous function $f: X \mapsto
 \mathbb{R}$, $f\vert A: A\mapsto \mathbb{R}$ is a bounded
 function. Every $\sigma$-bounded space is projectively Menger
 (Proposition 1.1 in \cite{arh}).

\section{Main results}

In order to prove the main theorem we need to prove some
statements that we call Lemmas, but note their self-importance.

\medskip

Recall that a space $X$ is called basically disconnected
(\cite{gj}), if every cozero-set has an open closure. Clearly,
every basically disconnected (Tychonoff) space is zero-dimensional
space.

\begin{lemma}\label{lem2}
\label{lm3}

 If $C_{\lambda}(X)$ is Menger, then $X$ is a basically
disconnected space.
\end{lemma}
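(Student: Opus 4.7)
The plan is to argue by contradiction: assume some cozero set of $X$ has non-open closure, and from the resulting ``fan point'' build a sequence of open covers of $C_\lambda(X)$ that admits no Menger selection. Concretely, pick $g\in C(X,[0,1])$ with $W:=g^{-1}((0,1])$ having $\overline{W}$ not open, and choose $x_0\in\overline{W}\setminus\operatorname{int}\overline{W}$; then $g(x_0)=0$ and every neighborhood of $x_0$ meets both $W$ and $X\setminus\overline{W}$. Using complete regularity I would then inductively build a nested sequence $N_1\supseteq N_2\supseteq\dots$ of open neighborhoods of $x_0$ together with cozero sets $U_k\subseteq W\cap N_k$ satisfying $\overline{U_k}\subseteq N_k$, $x_0\notin\overline{U_k}$, pairwise disjointness of the $\overline{U_k}$, and the shrinking property that any neighborhood of $x_0$ eventually contains all $\overline{U_k}$. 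Using that $\lambda$ is a $\pi$-network, pick $F_k\in\lambda$ with $F_k\subseteq U_k$ and $G_k\in\lambda$ with $G_k\subseteq(X\setminus\overline{W})\cap N_k$; fix $p_k\in F_k$ and, by Tychonoff separation, a continuous $\alpha_k\colon X\to[0,1]$ with $\alpha_k(p_k)=1$ and $\alpha_k\equiv 0$ off $U_k$, so that $\alpha_k$ also vanishes on every $F_j$ $(j\neq k)$ and every $G_j$.

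Next, for each $n\in\mathbb{N}$ I would consider the countable family
\[
\mathcal{O}_n=\bigl\{[F_k,(q-\tfrac{1}{n},q+\tfrac{1}{n})]\cap[G_k,(q-\tfrac{1}{n},q+\tfrac{1}{n})]\colon k\in\mathbb{N},\ q\in\mathbb{Q}\bigr\},
\]
each member being open in $C_\lambda(X)$. For any $f\in C(X)$, continuity of $f$ at $x_0$ combined with the shrinking of $F_k\cup G_k$ to $x_0$ forces $\operatorname{diam} f(F_k\cup G_k)<2/n$ for large $k$; choosing a rational $q$ close to $f(x_0)$ then shows that $f$ lies in some element of $\mathcal{O}_n$, so $\mathcal{O}_n$ is an open cover of $C_\lambda(X)$.

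Now suppose $\mathcal{V}_n\subseteq\mathcal{O}_n$ is a finite Menger selector with $\bigcup_n\mathcal{V}_n$ covering $C_\lambda(X)$. Let $K_n$ be the (finite) set of indices $k$ appearing in $\mathcal{V}_n$, and define $a_k=2/\min\{n:k\in K_n\}$ for $k\in\bigcup_n K_n$ and $a_k=0$ otherwise. Finiteness of each $K_n$ gives $a_k\to 0$, so that the telescope $f:=\sum_k a_k\alpha_k$ is a well-defined element of $C(X)$ (continuity off $x_0$ follows from the local finiteness of the supports $\overline{U_k}$, and continuity at $x_0$ from $a_k\to 0$ with $\alpha_k(x_0)=0$). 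For any $n$ and any $k\in K_n$ one computes $f(p_k)=a_k\ge 2/n$ while $f|_{G_k}\equiv 0$, so $f(F_k\cup G_k)\supseteq\{0,a_k\}$ has diameter $\ge 2/n$; hence $f$ lies outside every element of $\mathcal{V}_n$, and therefore outside $\bigcup_n\mathcal{V}_n$, contradicting the Menger property.

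The main obstacle is the inductive construction of the shrinking fan $\{U_k\}$: ensuring that any neighborhood of $x_0$ eventually engulfs the $\overline{U_k}$ requires enough countable ``access'' to $x_0$, and in the absence of a countable local base one must be careful to use only finitely many previously built closed sets at each stage and to exploit the fact that inside any neighborhood $N$ of $x_0$ one can find (by complete regularity together with $x_0\in\overline{W}$) a new cozero $U_{k+1}\subseteq N\cap W$ whose closure avoids $x_0$ and all previously constructed $\overline{U_j}$. Once this fan is in hand the cover-and-defeat argument is essentially mechanical.
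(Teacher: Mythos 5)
There is a genuine gap, and it is exactly at the point you flag as ``the main obstacle'': the shrinking fan $\{U_k\}$ need not exist. Your construction requires a sequence of nonempty open subsets of $W$ (and, for the covering claim about $\mathcal{O}_n$, also of the sets $G_k\subseteq X\setminus\overline{W}$) such that every neighborhood of $x_0$ contains all but finitely many of them; choosing $p_k\in U_k$ this forces a nontrivial sequence of points converging to $x_0$. A Tychonoff space can fail to be basically disconnected while having no nontrivial convergent sequences at all: the standard example is $\omega^*=\beta\mathbb{N}\setminus\mathbb{N}$, which is an $F$-space that is not basically disconnected (Gillman--Jerison, 6W), and in which no point is the limit of a sequence of distinct points. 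So for such an $X$ your argument cannot even begin --- step one (build the fan from the failure of basic disconnectedness alone, before any use of the Menger hypothesis) produces an object that does not exist, and no amount of care in the induction can repair this, since the conclusion of the induction is itself the impossible object. Note also that the weaker-sounding requirement that the nested neighborhoods $N_k$ be eventually inside every neighborhood of $x_0$ would amount to a countable local base at $x_0$, which is even less available. (The rest of your argument --- the covers $\mathcal{O}_n$, the choice $a_k=2/\min\{n:k\in K_n\}$, the continuity and the defeat of the selectors --- is fine \emph{conditional} on the fan, so the gap is localized but fatal.)

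The paper's proof sidesteps this by extracting its countable structure not from a local base at the bad point but from the canonical exhaustion of the cozero set: writing $U=\bigcup_n U_n$ with $\overline{U_n}\subseteq U_{n+1}$, it forms the closed sets $Z_{n,m}$ of functions vanishing off $\mathrm{Int}\,\overline{U}$ and bounded below by $2^{-m}$ on $U_n$, and applies the Menger property (to the increasing open covers by complements, diagonalizing over $m$) to produce a single $p\in C(X)$ with $p\geq 2^{-m}$ on all of $U$ and $p\equiv 0$ off $\mathrm{Int}\,\overline{U}$; continuity of $p$ then forces $\overline{U}\setminus\mathrm{Int}\,\overline{U}=\emptyset$ with no convergence at any particular point needed. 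If you want to salvage your approach, you would have to first prove (using Menger) that such fans exist, which is essentially as hard as the lemma itself; the exhaustion-based route is the correct source of countability here.
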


\begin{proof}

Let $U\subseteq X$ be a cozero set in $X$.
 Claim that  $\overline{U}=Int\overline{U}$. Suppose that
 $\overline{U}\setminus Int\overline{U}\neq \emptyset$.
Since $U$ is a cozero set, there are  open sets $U_n$ of $X$ such
that for each $n\in\mathbb N$,  $\overline{U_n}\subseteq U_{n+1}$
and $\bigcup\limits_{n=1}^{\infty}U_n=U$. For each $n,m\in\mathbb
N$, we put $Z_{n,m}=\{f\in C_{\lambda}(X,[0,1]): f\vert
(X\setminus Int\overline{U})\equiv 0$ and $f(U_n)\subset
[\frac{1}{2^m},1]\}$.

Note that $Z_{n,m}$ is closed subset of $C_{\lambda}(X)$ for each
$n,m\in\mathbb N$. Let $h\notin Z_{n,m}$.

If $x\in X\setminus Int\overline{U}$ such that $h(x)\neq 0$. Since
$\lambda$ is $\pi$-network of $X$, there is $A\in \lambda$ such
that $A\subset h^{-1}(h(x)-\frac{|h(x)|}{2},
h(x)+\frac{|h(x)|}{2})\bigcap Int (X\setminus Int\overline{U})$.
Then $h\in [A, (h(x)-\frac{|h(x)|}{2}, h(x)+\frac{|h(x)|}{2})]$
and $[A, (h(x)-\frac{|h(x)|}{2}, h(x)+\frac{|h(x)|}{2})]\bigcap
Z_{n,m}=\emptyset$.

If $x\in U_n$ and $h(x)\notin [\frac{1}{2^m},1]$. Let
$d=\frac{diam(h(x),[\frac{1}{2^m},1])}{2}$. Since $\lambda$ is a
$\pi$-network of $X$, there is $A\in \lambda$ such that $A\subset
h^{-1}((h(x)-d, h(x)+d)\bigcap U_n$. Then $h\in [A, (h(x)-d,
h(x)+d)]$ and $[A, (h(x)-d, h(x)+d)]\bigcap Z_{n,m}=\emptyset$.

 Assume that
$\bigcap \{Z_{n,m}: n\in \mathbb{N}\}=\emptyset$ for all $m\in
\mathbb{N}$. Using the Menger property of $C_{\lambda}(X)$, we can
take some $\varphi\in \mathbb{N}^{\mathbb{N}}$ such that $\bigcap
\{Z_{\varphi(m),m} :m\in \mathbb{N}\}=\emptyset$. For each $m\in
\mathbb{N}$, take any $g_m\in C_{\lambda}(X)$ satisfying
$g_m(X\setminus Int(\overline{U}))\equiv 0$ and
$g_m(U_{\varphi(m)})=\{1\}$. Let $g=\sum^{\infty}_{j=1}
2^{-j}g_j$. Then, $g\in C_{\lambda}(X)$ and $g(X\setminus
Int(\overline{U}))\equiv 0$. Fix any $m\in\mathbb{N}$, $1\leq k
\leq \varphi(m)$ and $x\in U_k$. Then we have

$g(x)=\sum^{\infty}_{j=1} 2^{-j}g_j(x)\geq 2^{-m}g_m(x)=2^{-m}$.

Hence, $g\in \bigcap \{Z_{\varphi(m),m} :m\in \mathbb{N}\}$. This
is a contradiction. Thus, there is some $m\in \mathbb{N}$ such
that $\bigcap \{Z_{n,m} : n\in \mathbb{N}\}\neq\emptyset$. Let
$p\in \bigcap \{Z_{n,m} : n\in \mathbb{N}\}$. Then $p(U)\subset
[\frac{1}{2^m},1]$ and $p\vert (X\setminus Int\overline{U})\equiv
0$. It follows that $\overline{U}\setminus Int\overline{U}=
\emptyset$.

\end{proof}

A subset $G\subset \omega^{\omega}$ is {\it dominating} if for
every $f\in \omega^{\omega}$ there is a $g\in G$ such that
$f(n)\leq g(n)$ for all but finitely many $n$.

\begin{theorem}(Hurewicz \cite{hur1})\label{hur}
A second countable space $X$ is Menger iff for every continuous
mapping $f: X\mapsto \mathbb{R}^{\omega}$, $f(X)$ is not
dominating.
\end{theorem}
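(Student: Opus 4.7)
My plan is to prove both directions of the equivalence. I read ``$f(X) \subseteq \mathbb{R}^{\omega}$ is dominating'' as: for every $\psi \in \omega^{\omega}$ some $y \in f(X)$ satisfies $\psi(n) \le y(n)$ for all but finitely many $n$; this extends the definition on $\omega^{\omega}$ given above.

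For $(\Rightarrow)$, suppose $X$ is Menger and $f : X \to \mathbb{R}^{\omega}$ is continuous. For each $n \in \omega$ the open sets $U_{n,k} = \{x : f(x)(n) < k\}$, $k \in \omega$, form an increasing open cover $\mathcal{U}_n$ of $X$. For each integer $N \ge 0$ I apply the Menger property to the tail sequence $(\mathcal{U}_n)_{n \ge N}$; by nesting, the chosen finite subfamily at stage $n$ may be taken to be a singleton, producing $\varphi_N \in \omega^{\omega}$ with $\bigcup_{n \ge N} U_{n, \varphi_N(n)} = X$. Setting $g(n) = 1 + \max_{N \le n} \varphi_N(n)$, for every $x \in X$ and every $N$ some $n \ge N$ satisfies $f(x)(n) < \varphi_N(n) < g(n)$, so $g$ is not dominated by any element of $f(X)$.

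For $(\Leftarrow)$, let $\{\mathcal{U}_n\}_{n \in \omega}$ be a sequence of open covers of $X$. Second countability gives Lindel$\ddot{o}$fness and metrizability, so passing to countable subcovers and forming unions I may assume $\mathcal{U}_n = \{U_{n,k}\}_{k \in \omega}$ with $U_{n,k} \subseteq U_{n,k+1}$. Paracompactness furnishes, for each $n$, a continuous partition of unity $\{\phi_{n,k}\}_k$ on $X$ with the property that $\phi_{n,k}(x) > 0$ implies $x \in U_{n,k}$, and I set $\tilde h_n(x) = \sum_k k\,\phi_{n,k}(x)$ and define a continuous map $F : X \to \mathbb{R}^{\omega}$ by $F(x)(n) = \tilde h_n(x)$. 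Because $\tilde h_n(x) \ge \min\{k : x \in U_{n,k}\}$, the hypothesis supplies $\psi \in \omega^{\omega}$ such that every $x \in X$ satisfies $\tilde h_n(x) < \psi(n)$ for infinitely many $n$; any such $n$ forces $x \in U_{n, \psi(n)}$, so the singleton families $\mathcal{V}_n = \{U_{n, \psi(n)}\} \subset \mathcal{U}_n$ have union equal to $X$, verifying the Menger property.

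The main obstacle is the $(\Leftarrow)$ direction: the natural integer-valued encoding $x \mapsto \min\{k : x \in U_{n,k}\}$ of a chosen cover member is merely upper semicontinuous, so one must manufacture a genuinely continuous $\mathbb{R}^{\omega}$-valued map whose small coordinate values still certify small cover indices. The partition-of-unity majorant accomplishes exactly this, and it is precisely at this step that the second countability of $X$ (hence metrizability and paracompactness) enters essentially.
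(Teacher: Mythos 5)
The paper does not prove this statement at all --- it is quoted as a classical theorem of Hurewicz with a citation to \cite{hur1}, so there is no in-paper argument to compare against. Your proof is correct and is essentially the standard argument for this result. In the forward direction, the device of applying the Menger property to each tail sequence $(\mathcal{U}_n)_{n\ge N}$ and then taking $g(n)=1+\max_{N\le n}\varphi_N(n)$ correctly upgrades ``each point is captured at least once'' to ``each point is captured at infinitely many coordinates,'' which is exactly what non-domination requires. In the converse direction, your partition-of-unity majorant $\tilde h_n(x)=\sum_k k\,\phi_{n,k}(x)$, together with the inequality $\tilde h_n(x)\ge\min\{k: x\in U_{n,k}\}$, is a clean way to replace the merely upper semicontinuous index function by a continuous one, and the reduction to increasing countable covers (and back to finite subfamilies of the original covers) is handled properly. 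The one point worth flagging: second countability alone does not yield metrizability or paracompactness --- you also need regularity (Urysohn metrization). Since every space in this paper is Tychonoff, this is harmless here, but it should be stated; alternatively, for a Tychonoff second countable space one can obtain the subordinate partitions of unity directly from countable, locally finite cozero refinements without invoking full metrizability.
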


"Second countable" can be extended to "Lindel$\ddot{o}$f":

\begin{theorem}(Ko$\check{c}$inac \cite{koc}, Theorem 2.2)\label{koc}
A Lindel$\ddot{o}$f space $X$ is Menger iff for every continuous
mapping $f: X\mapsto \mathbb{R}^{\omega}$, $f(X)$ is not
dominating.
\end{theorem}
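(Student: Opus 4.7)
The plan is to prove the two directions separately, reducing the harder contrapositive of $(\Leftarrow)$ to the second-countable case covered by Hurewicz's theorem (Theorem \ref{hur}).

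For $(\Rightarrow)$, I would just unpack the hypothesis: since $X$ is Menger and the Menger property is preserved by continuous maps (noted in the introduction), the image $f(X)$ is Menger. As a subspace of $\mathbb{R}^{\omega}$ it is second countable, so Theorem \ref{hur} applied to $f(X)$ immediately yields that $f(X)$ is not dominating.

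For $(\Leftarrow)$, I would argue the contrapositive: assume $X$ is Lindel\"of but not Menger, and construct a continuous $h \colon X \to \mathbb{R}^{\omega}$ with $h(X)$ dominating. Let $\{\mathcal{U}_n : n \in \omega\}$ witness the failure of Menger. Using Lindel\"ofness, extract countable subcovers; using the standing Tychonoff hypothesis, refine each $\mathcal{U}_n$ to a countable cozero cover $\mathcal{V}_n=\{V_{n,k}:k\in\omega\}$ with continuous functions $f_{n,k}\colon X\to[0,1]$ so that $V_{n,k}=f_{n,k}^{-1}((0,1])$. Assemble the diagonal map $F\colon X\to\mathbb{R}^{\omega\times\omega}\cong\mathbb{R}^{\omega}$ by $F(x)=(f_{n,k}(x))_{n,k}$, which is continuous. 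The image $F(X)$ is second countable, and I would verify it is not Menger by pulling back the natural open covers $\{W_{n,k}\cap F(X):k\in\omega\}$, where $W_{n,k}=\{y:y_{n,k}>0\}$: a finite Menger selection in $F(X)$ from these covers would pull back via $F^{-1}(W_{n,k})=V_{n,k}$ to a finite Menger selection from $\{\mathcal{V}_n\}$, and hence (since $\mathcal{V}_n$ refines $\mathcal{U}_n$) from $\{\mathcal{U}_n\}$, contradicting non-Menger of $X$. Having shown $F(X)$ is not Menger, I apply Theorem \ref{hur} to the second countable space $F(X)$ to obtain a continuous $g\colon F(X)\to\mathbb{R}^{\omega}$ with $g(F(X))$ dominating, and set $h=g\circ F$.

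The main obstacle is essentially bookkeeping around the construction of $F$: one must carry out the cozero-refinement step (combining Tychonoff and Lindel\"of) and record the clean identity $F^{-1}(W_{n,k})=V_{n,k}$, which is what transfers the non-Menger property from $X$ down to $F(X)$. Once this transfer is in place, the reduction to Hurewicz's theorem closes the argument.
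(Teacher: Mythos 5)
The paper offers no proof of this statement; it is quoted directly from Ko$\check{c}$inac's paper, so there is nothing internal to compare against. Your argument is correct and is essentially the standard proof of that result: the forward direction is preservation of Menger under continuous maps plus Theorem \ref{hur} applied to the second-countable image, and for the converse you correctly reduce a witnessing sequence of covers of a non-Menger Lindel$\ddot{o}$f space, via countable cozero refinements (which exist by Tychonoff plus Lindel$\ddot{o}$f and still witness the failure of Menger), to a second-countable continuous image $F(X)$ that is not Menger, at which point Theorem \ref{hur} produces the dominating image.
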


\begin{lemma}\label{lem1}
\label{lm1}

If $C_{\lambda}(X)$ is Menger. Then $X$ is pseudocompact.
\end{lemma}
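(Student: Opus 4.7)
My plan is to argue by contradiction along the lines of Lemma~\ref{lem2}: assume that $C_\lambda(X)$ is Menger and that $X$ is not pseudocompact, and derive a contradiction by constructing a doubly-indexed family of closed sets in $C_\lambda(X)$ to which the Menger covering property cannot be consistently applied.

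From the non-pseudocompactness of $X$, fix a continuous unbounded $h\colon X\to[0,\infty)$ and set $V_n=h^{-1}((3n,3n+1))$; after discarding empty indices, $\{V_n\}$ is a discrete family of pairwise disjoint nonempty open subsets of $X$. The $\pi$-network hypothesis yields $A_n\in\lambda$ with $A_n\subseteq V_n$, and the Tychonoff property combined with local finiteness of $\{V_n\}$ produces $f_n\in C(X,[0,1])$ with $f_n(A_n)=\{1\}$ and $f_n|_{X\setminus V_n}\equiv 0$.

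For each $n,m\in\mathbb{N}$, set
\[
Z_{n,m}=\bigl\{g\in C_\lambda(X,[0,1]):g|_{X\setminus\bigcup_k V_k}\equiv 0\text{ and }g(A_k)\subseteq[2^{-m},1]\text{ for all }k\le n\bigr\}.
\]
Each $Z_{n,m}$ is closed in $C_\lambda(X,[0,1])$ by the same $\pi$-network argument as in Lemma~\ref{lem2}: given $g\notin Z_{n,m}$, a small open set (inside some $V_k$ or inside $X\setminus\bigcup_j V_j$) contains an $A\in\lambda$ on which $g$ witnesses the failure of the defining condition, so that a suitable subbasic $[A,U]\ni g$ lies outside $Z_{n,m}$. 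If $\bigcap_n Z_{n,m}=\varnothing$ for every $m$, each family $\mathcal U_m=\{C_\lambda(X,[0,1])\setminus Z_{n,m}:n\in\mathbb N\}$ is an open cover of the closed (hence Menger) subspace $C_\lambda(X,[0,1])$, so the Menger property supplies $\varphi\in\mathbb N^{\mathbb N}$ with $\bigcap_m Z_{\varphi(m),m}=\varnothing$. However, setting $g_m=\sum_{k\le\varphi(m)}f_k\in Z_{\varphi(m),m}$ and $g=\sum_m 2^{-m}g_m\in C(X,[0,1])$ (a uniformly convergent series), one computes that $g(A_k)\ge 2^{-m_0}$ whenever $k\le\varphi(m_0)$, hence $g\in\bigcap_m Z_{\varphi(m),m}$, a contradiction. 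Therefore, for some $m_0$ the intersection $\bigcap_n Z_{n,m_0}$ is nonempty, producing $p\in C(X,[0,1])$ vanishing outside $\bigcup_k V_k$ with $p(A_k)\subseteq[2^{-m_0},1]$ for every $k$.

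The main obstacle I anticipate is converting the existence of this $p$ into an actual contradiction with the non-pseudocompactness of $X$: a bounded $p$ of this support shape is not by itself incompatible with unbounded continuous functions on $X$. I expect the definition of $Z_{n,m}$ will need sharpening — for instance, by replacing the lower bound $2^{-m}$ on $g(A_k)$ with a bound growing in $k$ such as $[k,\infty)$ and removing the upper bound $1$, so that the $p$ arising from the nonempty intersection is forced to take arbitrarily large values on the discrete family $\{A_k\}$ while still vanishing outside $\bigcup_k V_k$; continuity of $p$ at a boundary point of $\bigcup_k V_k$ (present thanks to the cozero structure inherited from $h$) would then yield the desired contradiction. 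Calibrating $Z_{n,m}$ so that both the $\pi$-network closedness argument and the series-construction of $g$ survive the modification is the technical heart of the proof.
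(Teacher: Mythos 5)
There is a genuine gap here, and you have in fact flagged it yourself: the argument never reaches a contradiction. The function $p$ extracted from the nonempty intersection $\bigcap_n Z_{n,m_0}$ --- continuous, vanishing off $\bigcup_k V_k$, with $p(A_k)\subseteq[2^{-m_0},1]$ for all $k$ --- exists with no appeal to the Menger property at all: since $\{V_k\}$ is a discrete family, $p=\sum_k f_k$ is already such a function. Worse, the sharpening you propose does not rescue this: a continuous function with $p(A_k)\subseteq[k,\infty)$ and support in $\bigcup_k V_k$ also always exists, namely $\sum_k k\,f_k$, again by discreteness of $\{V_k\}$ (every point of $X$ has a neighbourhood meeting at most one $V_k$, so the sum is locally finite and continuous, and there is no boundary point of $\bigcup_k V_k$ at which continuity could fail in the way you hope). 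The $Z_{n,m}$ template works in Lemma \ref{lem2} because there the limit function genuinely certifies $\overline{U}=Int\overline{U}$; it cannot certify pseudocompactness, because no single continuous function of the kind it is able to produce is incompatible with the existence of an unbounded function on $X$.

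The paper's proof uses a different final step, which is the idea your proposal is missing. By Lemma \ref{lem2} the sets $F_n=\overline{f^{-1}((n-\frac{1}{3},n+\frac{1}{3}))}$ are clopen and form a discrete family; one passes to the closed (hence Menger) subspace $K$ of functions constant on each $F_n$, picks $a_n\in A_n$ so that $D=\{a_n: n\in\mathbb{N}\}$ is a $C$-embedded copy of $\mathbb{N}$, and observes that restriction maps $K$ continuously \emph{onto} $\mathbb{R}^{D}=\mathbb{R}^{\omega}$. Since $\mathbb{R}^{\omega}$ is dominating, this contradicts Theorem \ref{koc} (a Lindel$\ddot{o}$f Menger space has no dominating continuous image in $\mathbb{R}^{\omega}$). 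In short, the obstruction to non-pseudocompactness is detected by mapping $C_{\lambda}(X)$ onto $\mathbb{R}^{\omega}$, not by producing a particular function inside $C_{\lambda}(X)$. Two smaller repairs your setup would also need: $h(X)$ need not meet infinitely many of the intervals $(3n,3n+1)$, so one must first arrange $\mathbb{N}\subseteq h(X)$ or centre the intervals at attained values; and to obtain $f_n\equiv 1$ on $A_n$ with support in $V_n$ you must choose $A_n$ inside a strictly smaller cozero set, since an arbitrary subset of $V_n$ need not be completely separated from $X\setminus V_n$.
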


\begin{proof} Assume that $X$ is not pseudocompact and  $f\in C(X)$ is not bounded
function. Without loss of generality we can assume that
$\mathbb{N}\subset f(X)$. For each $n\in \mathbb{N}$ we choose
$A_n\in \lambda$ such that $A_n\subset f^{-1}((n-\frac{1}{3},
n+\frac{1}{3}))$. By Lemma \ref{lem2},
$F_n=\overline{f^{-1}((n-\frac{1}{3}, n+\frac{1}{3}))}$ is clopen
set for each $n\in \mathbb{N}$. Let $K=\{f\in C(X): f\vert
F_n\equiv s_{f,n}$ for each $n\in \mathbb{N}$ and $s_{f,n}\in
\mathbb{R} \}$. Then $K$ is closed subset of $C_{\lambda}(X)$ and,
hence, it is Menger. Fix $a_n\in A_n$ for every $n\in \mathbb{N}$.
Note that $D=\{a_n : n\in \mathbb{N}\}$ is a $C$-embedded copy of
$\mathbb{N}$ (3L (1) in \cite{gj}). So we have a continuous
mapping $F: K \mapsto \mathbb{R}^D$ the space $K$ onto
$\mathbb{R}^D$. But
$F(K)=\mathbb{R}^D=\mathbb{R}^{\mathbb{\omega}}$ is dominating,
contrary to the Theorem \ref{koc}.

\end{proof}

\begin{lemma}\label{lem3} If $C_{\lambda}(X)$ is Menger, then $\mu=\{A\in
\lambda:$ $A$ is finite subset of $X \}$ is a $\pi$-network of
$X$.

\end{lemma}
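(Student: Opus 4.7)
The plan is to argue by contradiction, adapting the Menger-selection technique of the proof of Lemma~\ref{lem2}. Suppose $\mu$ is not a $\pi$-network, so there is a nonempty open $U \subseteq X$ such that every $A \in \lambda$ with $A \subseteq U$ is infinite. A preliminary observation: $U$ cannot contain any isolated point of $X$, since if $x \in U$ were isolated then $\{x\}$ would be an open singleton and the $\pi$-network property applied to $\{x\}$ would force $\{x\} \in \lambda$, producing a finite member of $\lambda$ inside $U$.

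By Lemma~\ref{lem2}, $X$ is basically disconnected and hence zero-dimensional; so I may refine $U$ and assume it is itself clopen. Because every clopen subset of $U$ is again infinite (it must contain some infinite $A \in \lambda$), repeated splitting using zero-dimensionality yields a pairwise disjoint family $\{E_n : n \in \mathbb{N}\}$ of nonempty clopen subsets of $U$. For each $n$, the $\pi$-network property combined with the hypothesis on $U$ then supplies an infinite $A_n \in \lambda$ with $A_n \subseteq E_n$, from which distinct points $\{a_{n,k} : k \in \mathbb{N}\}$ can be chosen.

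Next, imitating the template of Lemma~\ref{lem2}, for each pair $n, m \in \mathbb{N}$ I would define a closed subset $Z_{n,m} \subseteq C_{\lambda}(X,[0,1])$ encoding that $f$ vanishes outside a certain union of $E_k$'s and takes values at least $1/2^m$ on specified finitely many points of the $A_k$'s. The closedness of $Z_{n,m}$ is verified exactly as in Lemma~\ref{lem2}: for $h \notin Z_{n,m}$ the $\pi$-network property produces a basic open neighborhood of $h$ disjoint from $Z_{n,m}$. Assuming $\bigcap_n Z_{n,m} = \emptyset$ for every $m$, the Menger property applied to the sequence of open covers $\{C_{\lambda}(X,[0,1]) \setminus Z_{n,m}\}_n$, combined with the monotonicity $Z_{n+1,m} \subseteq Z_{n,m}$, yields a function $\varphi \in \mathbb{N}^{\mathbb{N}}$ with $\bigcap_m Z_{\varphi(m),m} = \emptyset$. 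The contradiction is then reached by constructing, via a sum $g = \sum_{j} 2^{-j} g_j$ of suitable clopen-supported bumps $g_j$ (possible because the $E_k$'s are clopen), a continuous function that explicitly lies in every $Z_{\varphi(m),m}$.

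\textbf{The main obstacle} is choosing the sets $Z_{n,m}$ so that three requirements hold simultaneously: (a) each $Z_{n,m}$ is closed in $C_{\lambda}(X,[0,1])$, (b) the family $(Z_{n,m})_n$ is monotone decreasing in $n$, so Menger's selection reduces to a single index $\varphi(m)$, and (c) the explicitly-constructed summed function $g$ does lie in every $Z_{\varphi(m),m}$. Unlike in Lemma~\ref{lem2}, where the emptiness of $\bigcap_n Z_{n,m}$ is forced by a boundary point of $\overline{U} \setminus \mathrm{Int}\,\overline{U}$, no such boundary point is available here; the emptiness must instead come from the infinitude of each $A_n$, with the pseudocompactness of $X$ (Lemma~\ref{lem1}) ensuring that $g$ sums to a well-defined bounded continuous function even though the family $\{E_n\}$ need not be locally finite.
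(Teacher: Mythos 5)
Your proposal has a genuine gap at its central step, one you yourself flag as ``the main obstacle'' and then leave unresolved: the sets $Z_{n,m}$ are never actually defined, and none of your three requirements (a)--(c) is verified. Two of them are in fact problematic. For (a), the conditions you describe are point-evaluations (``takes values at least $1/2^m$ on specified finitely many points of the $A_k$'s''), and a set of the form $\{f : f(a)\ge c\}$ need not be closed in $C_{\lambda}(X)$ when $\lambda$ is merely a $\pi$-network: a basic neighbourhood of a function $h$ with $h(a)<c$ is built from members of $\lambda$, none of which need contain the point $a$ (and you have already shown the relevant points are not isolated). In Lemma~\ref{lem2} closedness works only because the constraints are imposed on open sets $U_n$. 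More seriously, you never explain how the infinitude of the $A_n$ would force $\bigcap_n Z_{n,m}=\emptyset$; with constraints of the type you describe (vanishing off a union of the clopen $E_k$'s, large on points of the $A_k$'s), the characteristic function of a suitable clopen set already lies in every $Z_{n,m}$, so the intersection is nonempty and the whole scheme collapses. (A side remark: pseudocompactness is not what makes $g=\sum_j 2^{-j}g_j$ continuous; uniform convergence of the series does that.)

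The paper's proof uses a different and more direct mechanism. Choose pairwise disjoint nonempty open sets $V_n\subseteq U$ and points $x_n\in V_n$. For each $n$, the family $\mathcal{U}_n=\{[B_{f,n},(f(x_n)-\epsilon,f(x_n)+\epsilon)] : f\in C_{\lambda}(X)\}$, where $B_{f,n}\in\lambda$ and $B_{f,n}\subseteq f^{-1}((f(x_n)-\epsilon,f(x_n)+\epsilon))\cap V_n$, is an open cover of $C_{\lambda}(X)$, and every such $B_{f,n}$ is infinite because it lies inside $U$. Applying Menger to the sequence $(\mathcal{U}_n)_n$ gives finite selections $\mathcal{S}_n$; from each of the countably many infinite sets $B_{f_s,n}$ occurring in them one picks pairwise distinct points $z_{s,n}$, prescribes at $z_{s,n}$ a value ($0$ or $2\epsilon$) lying outside the corresponding interval of length $2\epsilon$, and uses the fact that countable (relatively discrete) subsets of the basically disconnected space $X$ are $C^*$-embedded (Lemma~\ref{lem2} together with 14N of \cite{gj}) to realize this prescription by some $t\in C_{\lambda}(X)$. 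Then $t$ belongs to no member of $\bigcup_n\mathcal{S}_n$, contradicting the cover property. This is where the infinitude of the members of $\lambda$ inside $U$ and the basic disconnectedness are actually used; your sketch contains neither this selection-of-fresh-points idea nor a working substitute for it.
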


\begin{proof} Assume that there exist an open set $U$ of $X$ such
that $B\not\subset U$ for every $B\in \mu$. Fix a family $\{V_n:
n\in \mathbb{N}\}$ of open subsets of $X$ such that $V_n\subset U$
for every $n\in \mathbb{N}$ and $V_{n'}\bigcap V_{n''}=\emptyset$
for $n'\neq n''$. Fix $x_n\in V_n$ and $\epsilon>0$. For every
$f\in C_{\lambda}(X)$ and $n\in \mathbb{N}$ consider $B_{f,n}\in
\lambda$ such that $B_{f,n}\subset f^{-1}((f(x_n)-\epsilon,
f(x_n)+\epsilon))\bigcap V_n$. Then $\mathcal{U}_n=\{[B_{f,n},
(f(x_n)-\epsilon, f(x_n)+\epsilon)] : f\in C_{\lambda}(X)\}$ is an
open cover of $C_{\lambda}(X)$ for every $n\in \mathbb{N}$. Using
the Menger property of $C_{\lambda}(X)$, for sequence
$\{\mathcal{U}_n : n\in \omega\}$ of open covers of
$C_{\lambda}(X)$, there are finite subfamilies
$\mathcal{S}_n\subset \mathcal{U}_n$ such that $\bigcup
\{\mathcal{S}_n : n\in \omega \}$ is a cover of $C_{\lambda}(X)$.
Let $\mathcal{S}_n=\{[B_{f_1,n}, (f_{1,n}(x_n)-\epsilon,
f_{1,n}(x_n)+\epsilon)], ..., [B_{f_{k(n)},n},
(f_{k(n),n}(x_n)-\epsilon, f_{k(n),n}(x_n)+\epsilon)]\}$ for every
$n\in \mathbb{N}$. Since $B_{f_{s},n}$ is an infinite subset of
$X$, we fix $z_{s,n}\in B_{f_{s},n}$ for every $s\in
\overline{1,k(n)}$ and $n\in \mathbb{N}$ such that $z_{s',n}\neq
z_{s'',n}$ for $s'\neq s''$. Let $Z=\{z_{s,n}: s\in
\overline{1,k(n)}$ and $n\in \mathbb{N}\}$.

Define the function $q: Z\mapsto \mathbb{R}$ such that
$q(z_{s,n})=0$ if $0\notin (f_{s,n}(x_n)-\epsilon,
f_{s,n}(x_n)+\epsilon)$, else $q(z_{s,n})=2\epsilon$ for $s\in
\overline{1,k(n)}$ and $n\in \mathbb{N}$. By Lemma \ref{lem2}, $X$
is a basically disconnected space.

Recall that (14N p.215 in \cite{gj}) every countable set in a
basically disconnected space is $C^*$-embedded.

Hence, there is $t\in C_{\lambda}(X)$ such that $t\vert Z=q$. But
$t\notin \bigcup \{\mathcal{S}_n : n\in \omega \}$. This is a
contradiction.

\end{proof}

Denote $D(X)$ a set of isolated points of $X$.

\begin{lemma}\label{lem4} If $C_{\lambda}(X)$ is Menger, then $D(X)$ is dense
set in $X$.
\end{lemma}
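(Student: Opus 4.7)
My plan is to mimic the strategy of Lemma~\ref{lem3}, now using all three previous conclusions---$X$ is pseudocompact (Lemma~\ref{lem1}), basically disconnected (Lemma~\ref{lem2}), and $\mu$ is a $\pi$-network of $X$ (Lemma~\ref{lem3})---to derive a contradiction from the assumption that $D(X)$ is not dense.

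Suppose $D(X)$ is not dense. Then $W = X \setminus \overline{D(X)}$ is a nonempty open set whose points are all non-isolated. Using Lemma~\ref{lem2}, pick a cozero $V \subseteq W$ and pass to the clopen set $Y = \overline{V}$; an isolated point of $X$ lying in $\overline{V}$ would be forced to lie in $V$ (since $\{x\}$ is open for isolated $x$), so $Y \cap D(X) = \varnothing$ and $Y$ itself has no isolated points. Inside $Y$ build a pairwise disjoint family $\{V_n : n \in \mathbb{N}\}$ of nonempty cozero sets, and by Lemma~\ref{lem3} choose a finite $A_n \in \lambda$ with $A_n \subseteq V_n$, fixing $x_n \in A_n$.

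For each $f \in C_\lambda(X)$ and each $n$, the set $V_n \cap f^{-1}((f(x_n) - 1/3, f(x_n) + 1/3))$ is open and nonempty (it contains $x_n$), so by Lemma~\ref{lem3} we can pick a finite $B_{f, n} \in \lambda$ inside it. Form the open cover
\[
\mathcal{U}_n = \bigl\{[B_{f, n},\, (f(x_n) - 1/3,\, f(x_n) + 1/3)] : f \in C_\lambda(X)\bigr\}
\]
of $C_\lambda(X)$ and apply the Menger property to produce finite $\mathcal{S}_n \subseteq \mathcal{U}_n$ with $\bigcup \mathcal{S}_n$ covering $C_\lambda(X)$; write $\mathcal{S}_n = \{[B_{f_s, n},\, I_{s, n}] : s = 1, \dots, k(n)\}$ with $I_{s, n} = (f_{s, n}(x_n) - 1/3, f_{s, n}(x_n) + 1/3)$. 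Following Lemma~\ref{lem3}, pick witnesses $z_{s, n} \in B_{f_s, n}$ and define $q\colon \{z_{s, n}\} \to \{0, 2/3\}$ by $q(z_{s, n}) = 0$ when $0 \notin I_{s, n}$ and $q(z_{s, n}) = 2/3$ otherwise. Since each $I_{s, n}$ has length $2/3$ it cannot contain both $0$ and $2/3$, so $q(z_{s, n}) \notin I_{s, n}$ always. By basic disconnectedness the countable set of witnesses is $C^*$-embedded in $X$, and $C^*(X) = C(X)$ by pseudocompactness, so $q$ extends to some $t \in C(X)$ with $t \notin [B_{f_s, n}, I_{s, n}]$ for every $(s, n)$, contradicting that $\bigcup \mathcal{S}_n$ is a cover.

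The main obstacle, absent from Lemma~\ref{lem3} where $B_{f, n}$ is automatically infinite, is that here each finite $B_{f, n}$ may be as small as a singleton, so the witnesses $z_{s, n}$ at a fixed level $n$ might not be distinct across $s$; a single repeated witness $z$ would then need $q(z)$ to avoid a union of several $I_{s, n}$'s, which can easily contain both $0$ and $2/3$. The fix is to refine the cover at the outset: using the absence of isolated points in $V_n$, subdivide $V_n$ into a pairwise disjoint family $\{V_n^{(k)} : k \in \mathbb{N}\}$ of nonempty cozero sets, pre-choose finite $B_{n, k} \in \lambda$ with $B_{n, k} \subseteq V_n^{(k)}$, and restrict $\mathcal{U}_n$ to members of the form $[B_{n, k}, I]$; the pairwise disjointness of the $V_n^{(k)}$'s then forces witnesses drawn from distinct $B_{n, k}$'s to be automatically distinct, and the two-value construction of $q$ goes through as in Lemma~\ref{lem3}.
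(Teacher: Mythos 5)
Your strategy is genuinely different from the paper's: you run a Lemma~\ref{lem3}-style argument with a sequence of covers and the full Menger property, and you correctly spot the obstacle that Lemma~\ref{lem3} avoids by working with \emph{infinite} members of $\lambda$ --- here the sets $B_{f,n}$ are finite, so witnesses at a fixed level $n$ may repeat. However, your proposed fix does not close the gap. If the sets $B_{n,k}\in\lambda$ are pre-chosen inside $V_n^{(k)}$ independently of $f$, the refined family $\{[B_{n,k},I]\}$ (with $I$ an interval of length $2/3$) need not cover $C_{\lambda}(X)$: membership $f\in[B_{n,k},I]$ forces $f$ to oscillate by less than $2/3$ on $B_{n,k}$, and nothing in the hypotheses makes any $B_{n,k}$ a singleton, so one can pick two points in each $B_{n,k}$ and (using basic disconnectedness, $C^*$-embeddedness of countable sets) produce a bounded continuous $f$ with oscillation $1$ on every $B_{n,k}$; such an $f$ lies in no member of the refined $\mathcal{U}_n$. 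And even where the cover survives, the finite selection $\mathcal{S}_n$ may contain $[B_{n,k},I_1]$ and $[B_{n,k},I_2]$ with the \emph{same} $B_{n,k}$ and with $0\in I_1$, $2/3\in I_2$ (e.g.\ $I_1=(-1/3,1/3)$, $I_2=(1/3,1)$); a witness $z\in B_{n,k}$ then has no admissible value in $\{0,2/3\}$, so the two-valued construction of $q$ breaks. Disjointness of the $V_n^{(k)}$ only separates witnesses attached to \emph{different} $B_{n,k}$, which was never the problem.

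The paper sidesteps all of this with a single cover and only the Lindel\"{o}f consequence of Menger. Take an open $W$ with $W\cap D(X)=\varnothing$ and, by Lemma~\ref{lem3}, one finite $A\in\mu$ with $A\subset W$; since $A$ consists of non-isolated points, $X\setminus A$ is dense. Cover $C_{\lambda}(X)$ by $[A,(-\epsilon,\epsilon)]$ together with, for each $f\neq f_0$, a set $[B_f,I_f]$ where $B_f\in\mu$, $B_f\cap A=\varnothing$, and $I_f=(f(x_f)-|f(x_f)|/2,\,f(x_f)+|f(x_f)|/2)$ with $f(x_f)\neq 0$, so that $0\notin I_f$. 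A countable subcover produces countably many $B_{f_n}$, all disjoint from $A$; extending the function that is $0$ on $\bigcup_n B_{f_n}$ and $\epsilon$ at a point of $A$ gives $h\in C(X)$ lying in no member of the subcover. The structural point worth absorbing is that the paper arranges every interval constraint except one to be ``avoid a neighbourhood of a nonzero value,'' which the single value $0$ defeats simultaneously no matter how the finite sets overlap, and reserves the one designated set $A$ to defeat the remaining element $[A,(-\epsilon,\epsilon)]$. If you want to keep your multi-cover scheme, you would instead have to let $q$ take, at each witness, a value chosen outside the finite union of all intervals attached to the sets containing that witness, and then argue separately that $q$ can be kept bounded; as written, the proof has a genuine gap.
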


\begin{proof} Assume that there exist an open set $W\neq\emptyset$ such that
$W\bigcap D(X)=\emptyset$. By Lemma \ref{lem3}, $\mu$ is
$\pi$-network of $X$, hence, there is $A\in \mu$ such that
$A\subset W$. Note that $X\setminus A$ is dense set in $X$. The
constant zero function defined on $X$ is denoted by $f_0$. For
every $f\in C(X)\setminus \{f_0\}$ there is $x_f\in X\setminus A$
such that $f(x_f)\neq 0$. For every $f\in C(X)\setminus \{f_0\}$,
consider $B_f\in \mu$ such that $B_f\subset
f^{-1}((f(x_f)-\frac{|f(x_f)|}{2},
f(x_f)+\frac{|f(x_f)|}{2}))\bigcap (X\setminus A)$. Let
$\epsilon>0$.  Then $\mathcal{V}=\{[B_f,
(f(x_f)-\frac{|f(x_f)|}{2}, f(x_f)+\frac{|f(x_f)|}{2})] : f\in
C(X)\setminus \{f_0\}\}\bigcup [A,(-\epsilon, \epsilon)]$ is an
open cover of $C_{\lambda}(X)$.  Since $C_{\lambda}(X)$ is Menger
and, hence, $C_{\lambda}(X)$ is Lindel$\ddot{o}$f, there is a
countable subcover $\mathcal{V}'=\{[B_{f_n},
(f_n(x_f)-\frac{|f_n(x_f)|}{2}, f_n(x_f)+\frac{|f_n(x_f)|}{2})]$ :
for $n\in \mathbb{N}\}\bigcup [A,(-\epsilon, \epsilon)]\subset
\mathcal{V}$ of $C_{\lambda}(X)$. Since $X$ is a basically
disconnected space and every countable set in a basically
disconnected space is $C^*$-embedded, there is $h\in C(X)$ such
that $h\vert \bigcup\limits_{n\in \mathbb{N}} B_{f_n}\equiv 0$ and
$h(a)=\epsilon$ for some $a\in A$. Note that $h\notin \bigcup
\mathcal{V}'$, to contradiction.

\end{proof}

\begin{lemma}\label{lem5} If $C_{\lambda}(X)$ is Menger, then $D(X)$ is $C^*$-embedded.
\end{lemma}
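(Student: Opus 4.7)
The plan is to derive a contradiction from the assumption that $D(X)$ fails to be $C^*$-embedded in $X$. Concretely, I would fix a bounded function $g:D(X)\to[0,1]$ that does not extend to any continuous function on $X$; such a $g$ exists because $D(X)$ is discrete, so every bounded function on it is automatically continuous. For each $f\in C(X)$ the restriction $f\vert_{D(X)}$ differs from $g$ at some point $x_f\in D(X)$, and I would set $\epsilon_f=|f(x_f)-g(x_f)|/3$ together with $V_f=(f(x_f)-\epsilon_f,f(x_f)+\epsilon_f)$, so that $f(x_f)\in V_f$ but $g(x_f)\notin V_f$.

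The only delicate point is to verify that the singleton $\{x_f\}$ is itself an element of $\lambda$. Since $x_f\in D(X)$, the set $\{x_f\}$ is open in $X$, and because $\lambda$ is a $\pi$-network there exists $A\in\lambda$ with $A\subseteq\{x_f\}$; as $A\neq\varnothing$, this forces $A=\{x_f\}\in\lambda$. Consequently $U_f:=[\{x_f\},V_f]$ is a genuine sub-basic open set of $C_{\lambda}(X)$ containing $f$, and the family $\{U_f:f\in C(X)\}$ covers $C_{\lambda}(X)$.

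Because Menger implies Lindel\"of, I would extract a countable subcover $\{U_{f_k}\}_{k\in\mathbb{N}}$ and set $E=\{x_{f_k}:k\in\mathbb{N}\}$, a countable subset of $X$. By Lemma \ref{lem2}, $X$ is basically disconnected, so every countable subset of $X$ is $C^*$-embedded (14N p.215 in \cite{gj}). The bounded function $g\vert_E$ therefore extends to some $h\in C(X)$. Now $h$ must lie in $U_{f_k}$ for some $k$, so $h(x_{f_k})\in V_{f_k}$; but $h\vert_E=g\vert_E$ forces $h(x_{f_k})=g(x_{f_k})\notin V_{f_k}$, a contradiction. The main obstacle is identifying this covering family---once one observes that singletons from $D(X)$ automatically belong to $\lambda$, the argument mirrors Lemma \ref{lem4} and uses only the Lindel\"ofness of $C_{\lambda}(X)$ together with the basic disconnectedness already supplied by Lemma \ref{lem2}.
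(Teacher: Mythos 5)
Your proof is correct: singletons of isolated points do belong to any $\pi$-network, so each $U_f=[\{x_f\},V_f]$ is a genuine subbasic open set, the family covers $C_{\lambda}(X)$, and the final contradiction is sound. The route is, however, the dual of the one the paper takes. The paper argues directly: given a bounded $f$ on $D(X)$, for each countable $A\subset D(X)$ it forms the closed set $F_A=\{g\in C(X): g\vert A=f\vert A\}$, observes that $F_A\neq\emptyset$ because countable subsets of the basically disconnected space $X$ are $C^*$-embedded (Lemma \ref{lem2} together with 14N of \cite{gj}), notes that the family $\{F_A\}$ has the countable intersection property (since $\bigcap_{n}F_{A_n}=F_{\bigcup_n A_n}$), and then invokes the closed-set characterization of Lindel\"{o}fness to obtain a point $\widetilde{f}\in\bigcap_A F_A$, which is the desired extension. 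You instead argue by contradiction with an explicit open cover indexed by $C(X)$ and extract a countable subcover. The two arguments rest on exactly the same two ingredients --- Lindel\"{o}fness of $C_{\lambda}(X)$ and $C^*$-embeddability of countable subsets supplied by Lemma \ref{lem2} --- and are essentially contrapositives of one another; yours is closer in form to the paper's proof of Lemma \ref{lem4}, while the paper's version is direct (no contradiction) and produces the extension as an explicit point of an intersection. Neither argument uses more of the Menger hypothesis than Lindel\"{o}fness.
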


\begin{proof} Let $f$ be a bounded  continuous function from
$D(X)$ into $\mathbb{R}$, and $F_{A}=\{g\in C(X): g\vert A=f\vert
A\}$ for $A\in D(X)^{\omega}$. Note that $F_A$ is closed subset of
$C_{\lambda}(X)$ and, by Lemma \ref{lem2}, $F_A\neq\emptyset$. So
$\xi=\{F_A : A\in D(X)^{\omega}\}$ is family of closed subspaces
with the countable intersection property. Since $C_{\lambda}(X)$
is Menger, hence, it is Lindel$\ddot{o}$f, and every family of
closed subspaces of with the countable intersection property has
non-empty intersection. It follows that $\bigcap \xi\neq
\emptyset$. We thus get that $\widetilde{f}\in \bigcap \xi$ such
that $\widetilde{f}\in C(X)$ and $\widetilde{f}\vert D(X)=f$.

\end{proof}

\begin{proposition}\label{pr1} Let $X=\mathbb{N}$ and let
$\lambda=\{X\}\bigcup \{\{x\}: x\in X\}$. Then $C^*_{\lambda}(X)$
is not Menger.

\end{proposition}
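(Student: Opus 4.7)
The plan is to falsify the Menger property on a constant sequence of open covers. Since $\mathbb{N} \in \lambda$, every set of the form $[\mathbb{N}, \mathbb{R}\setminus\{a\}] = \{f \in C^*(\mathbb{N}) : a \notin f(\mathbb{N})\}$ is $\lambda$-open, so I would take
\[ \mathcal{U}_n = \mathcal{U} = \{[\mathbb{N}, \mathbb{R}\setminus\{a\}] : a \in [0,1]\} \]
for every $n \in \omega$. To see $\mathcal{U}$ is a cover of $C^*_\lambda(X)$, note that any $f \in C^*(\mathbb{N})$ has countable range, so $[0,1] \setminus f(\mathbb{N})$ is nonempty, and picking any $a$ in it places $f$ in the corresponding member of $\mathcal{U}$.

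For arbitrary finite subfamilies $\mathcal{V}_n \subset \mathcal{U}$, encode each $\mathcal{V}_n$ by the finite set $A_n \subset [0,1]$ of forbidden values and put $A = \bigcup_n A_n$. A function $f$ belongs to some member of $\mathcal{V}_n$ exactly when some $a \in A_n$ is missing from $f(\mathbb{N})$, so $f$ escapes $\bigcup_n \mathcal{V}_n$ precisely when $A \subseteq f(\mathbb{N})$. Since $A$ is countable and contained in $[0,1]$, enumerating $A = \{a_1, a_2, \ldots\}$ and defining $f(k) = a_k$ for $k \le |A|$ and $f(k) = 0$ otherwise yields $f \in C^*(\mathbb{N})$ with $A \subseteq f(\mathbb{N})$. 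Thus $\bigcup_n \mathcal{V}_n$ fails to cover $C^*_\lambda(X)$, and the Menger property does not hold.

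The only real obstacle is calibrating the parameter set $[0,1]$: it must be uncountable to guarantee that $\mathcal{U}$ genuinely covers $C^*_\lambda(X)$, yet bounded so that the diagonal witness $f$ remains in $C^*(\mathbb{N})$. This exploits precisely what distinguishes the $\lambda$-open topology from the topology of pointwise convergence, namely the subbasic opens $[\mathbb{N}, U]$ which are sensitive to the entire range of $f$.
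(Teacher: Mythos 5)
Your proof is correct, but it takes a genuinely different route from the paper's. You exhibit a single uncountable open cover $\mathcal{U}=\{[\mathbb{N},\mathbb{R}\setminus\{a\}]:a\in[0,1]\}$ (legitimate, since $\mathbb{N}\in\lambda$ and $\mathbb{R}\setminus\{a\}$ is open) and observe that any countable union of finite selections forbids only a countable set $A\subseteq[0,1]$ of values, which a single bounded sequence can realize entirely in its range; this is airtight, and in fact it proves the stronger statement that $C^*_{\lambda}(\mathbb{N})$ is not even Lindel\"{o}f, since the same diagonal function defeats any countable subfamily of $\mathcal{U}$. The paper instead builds a genuinely varying sequence of \emph{countable} covers $\mathcal{V}_i$, each consisting of one ``global'' set $[\mathbb{N},(-2+\frac{1}{i+1},2-\frac{1}{i+1})]$ together with singleton-based sets $[x,\mathbb{R}\setminus[-2+\frac{2i+1}{2i(i+1)},2-\frac{2i+1}{2i(i+1)}]]$, and then runs an inductive diagonalization assigning values $f(x^i_n)=0$ and $f(s_i)=p_i$ to escape every selected member; because each $\mathcal{V}_i$ is countable, that argument locates the failure in the Menger selection principle itself rather than in Lindel\"{o}fness. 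Your approach buys brevity and a stronger conclusion; the paper's buys a witness by countable covers, but for the proposition as stated your argument fully suffices.
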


\begin{proof} Assume that $C^*_{\lambda}(X)$ is Menger. For every $i\in \mathbb{N}$ consider an open cover
$\mathcal{V}_i=\{[\mathbb{N}, (-2+\frac{1}{i+1},
2-\frac{1}{i+1})]\}\bigcup \{[x,(-\infty, -2+
\frac{2i+1}{2i(i+1)})\bigcup (2-\frac{2i+1}{2i(i+1)}, +\infty)] :
x\in X\}$ of $C^*_{\lambda}(X)$. Using the Menger property of
$C^*_{\lambda}(X)$, for sequence $\{\mathcal{V}_i : i\in
\mathbb{N}\}$ of open covers of $C^*_{\lambda}(X)$, there are
finite subfamilies $\mathcal{S}_i\subset \mathcal{V}_i$ such that
$\bigcup \{\mathcal{S}_i : i\in \mathbb{N} \}$ is a cover of
$C^*_{\lambda}(X)$.

Without loss of generality we can assume that $[\mathbb{N},
(-2+\frac{1}{i+1}, 2-\frac{1}{i+1})]\in \mathcal{S}_i$ for each
$i\in \mathbb{N}$.

By using induction, for each $i\in \mathbb{N}$, determine the
values of the function $f$ at some points, depending on the
$\mathcal{S}_i$,  as follows:

 for $i=1$ and

 $\mathcal{S}_1=\{[\mathbb{N}, (-2+\frac{1}{2},
2-\frac{1}{2})], [x^1_1,(-\infty, -2+ \frac{3}{4})\bigcup
(2-\frac{3}{4}, +\infty)], ..., [x^1_k,(-\infty, -2+
\frac{3}{4})\bigcup (2-\frac{3}{4}, +\infty)]\}$, define

$f(x^1_n)=0$ for $n\in \overline{1,k}$ and

$f(s_1)=p_1$ where $p_1\in
[-2+\frac{5}{12},2-\frac{5}{12}]\setminus
(-2+\frac{1}{2},2-\frac{1}{2})$ for some $s_1\in X\setminus
\{x^1_n : n\in \overline{1,k}\}$. Denote $P_1=\bigcup\limits_{n\in
\overline{1,k}} x^1_n \bigcup s_1$.

for $i=m$

$\mathcal{S}_m=\{[\mathbb{N}, (-2+\frac{1}{m+1},
2-\frac{1}{m+1})], [x^m_1,(-\infty, -2+
\frac{2m+1}{2m(m+1)})\bigcup (2-\frac{2m+1}{2m(m+1)}, +\infty)],
...$

$..., [x^m_{k(m)},(-\infty, -2+ \frac{2m+1}{2m(m+1)})\bigcup
(2-\frac{2m+1}{2m(m+1)}, +\infty)]\}$, define

$f(x^m_n)=0$ where $x^m_n\notin P_{m-1}$ for $n\in
\overline{1,k(m)}$ and

$f(s_m)=p_m$ where $p_m\in [-2+
\frac{2(m+1)+1}{2(m+1)(m+2)},2-\frac{2(m+1)+1}{2(m+1)(m+2)}]\setminus
(-2+\frac{1}{m+1}, 2-\frac{1}{m+1})$ for some $s_m\in X\setminus
P_{m-1}$. Denote $P_m=\bigcup\limits_{n\in \overline{1,k(m)}}
x^m_n \bigcup s_m\bigcup P_{m-1}$ and $P=\bigcup\limits_{m\in
\mathbb{N}} P_m$.

If $X\setminus P\neq\emptyset$, then let $f(x)=1$ for $x\in
X\setminus P$.

By construction of $f$, $f\notin \mathcal{S}_i$ for every $i\in
\mathbb{N}$, to contradiction.

\end{proof}

\begin{lemma}\label{lem6}

If $C_{\lambda}(X)$ is Menger, then each $A\in \lambda$ is finite
subset of $D(X)$.

\end{lemma}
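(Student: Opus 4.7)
My plan is to adapt the diagonal argument of Proposition~\ref{pr1}, letting $A$ play the role that $\mathbb{N}$ played there. With the intervals $U_i=(-2+\tfrac{1}{i+1},2-\tfrac{1}{i+1})$ and $W_i=(-\infty,-2+c_i)\cup(2-c_i,+\infty)$, $c_i=\tfrac{2i+1}{2i(i+1)}$ (so $\mathbb{R}\setminus U_i\subseteq W_i$ and $c_\cdot$ is strictly decreasing), consider for each $i\in\mathbb{N}$ the family
\[
\mathcal{V}_i=\{[A,U_i]\}\cup\{[\{y\},W_i]:\,y\in D(X)\}.
\]
Each $\{y\}$ for $y\in D(X)$ belongs to $\lambda$ because $\mu$ is a $\pi$-network (Lemma~\ref{lem3}) and the nonempty open set $\{y\}$ must contain some $B\in\mu\subseteq\lambda$, forcing $B=\{y\}$. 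The family $\mathcal{V}_i$ covers $C_{\lambda}(X)$: if $f(A)\not\subseteq U_i$, then some $a\in A$ has $f(a)\in\mathbb{R}\setminus U_i\subseteq W_i$, and continuity of $f$ together with density of $D(X)$ (Lemma~\ref{lem4}) yields an isolated $y$ with $f\in[\{y\},W_i]$.

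Apply the Menger property to $(\mathcal{V}_i)$ to extract finite $\mathcal{S}_i\subseteq\mathcal{V}_i$ with $\bigcup_i\mathcal{S}_i=C_{\lambda}(X)$; without loss of generality $[A,U_i]\in\mathcal{S}_i$ and the other members are $[\{y^i_j\},W_i]$, $j=1,\ldots,k(i)$, $y^i_j\in D(X)$. I would construct $f\in C(X)$ missing every $\mathcal{S}_i$ by mirroring Proposition~\ref{pr1}'s inductive recipe: at stage $i$ choose a fresh $s_i\in A$ (disjoint from the earlier $y^{i'}_j$ and $s_{i'}$), pick $p_i\in[-2+c_{i+1},2-c_{i+1}]\setminus U_i$, and arrange $g(y^{i'}_j)=0$ together with a prescription that will yield $f(s_i)=p_i$. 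Pseudocompactness (Lemma~\ref{lem1}) and $C^*$-embeddedness of $D(X)$ (Lemma~\ref{lem5}) let any bounded $g:D(X)\to\mathbb{R}$ extend to $f\in C^*(X)=C(X)$. Because $c_{i+1}\le c_m$ for $m\ge i+1$, $p_i$ lies in $[-2+c_m,2-c_m]$ and hence $p_i\notin W_m$ whenever $m\ge i+1$, handling the ``later-$y$'' collisions exactly as in Proposition~\ref{pr1}.

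The principal obstacle is selecting $s_i\in A$ and realising $f(s_i)=p_i$. When $|A\cap D(X)|=\infty$, take $s_i\in(A\cap D(X))\setminus(\bigcup_{i'\le i}\{y^{i'}_j\}\cup\{s_{i'}:i'<i\})$ and set $g(s_i)=p_i$ directly on the discrete set $D(X)$. When $|A\cap D(X)|$ is finite and $A$ has infinitely many non-isolated points, the plan is to exploit zero-dimensionality of $X$ (from Lemma~\ref{lem2}) to choose clopen neighbourhoods $V_i\ni s_i$ (with $s_i\in A\setminus D(X)$) that are pairwise disjoint and disjoint from the earlier $y^{i'}_j$'s, then declare $g\equiv p_i$ on $V_i\cap D(X)$; density of $D(X)\cap V_i$ in the clopen $V_i$ forces the $C^*$-extension $f$ to equal $p_i$ throughout $V_i$, in particular at $s_i$. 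The hardest subcase is $A$ finite with a single non-isolated $x_0$, where $s_i=x_0$ is forced for every $i$: here I would instead partition a clopen neighbourhood of $x_0$ into pairwise disjoint clopen pieces $V_i$ (not containing $x_0$) accumulating at $x_0$, set $g\equiv p_i$ on $V_i\cap D(X)$ with $p_i\to\pm 2$, and use continuity of the $C^*$-extension to force $f(x_0)=\pm 2\notin U_i$ for every $i$, while $f\equiv p_i$ on $V_i$ stays outside the relevant $W_m$'s by the same monotonicity argument.
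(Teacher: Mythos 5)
You take a genuinely different route from the paper: you re-run the diagonal argument of Proposition~\ref{pr1} directly inside $C_{\lambda}(X)$ with the covers $\mathcal{V}_i=\{[A,U_i]\}\cup\{[\{y\},W_i]:y\in D(X)\}$, whereas the paper splits into two cases, reducing ``$A$ countably infinite and contained in $D(X)$'' to Proposition~\ref{pr1} via a continuous surjection onto $C^*_{p\cup\{\mathbb{N}\}}(\mathbb{N})$, and handling every $A$ meeting $X\setminus D(X)$ (or uncountable) by a single-cover Lindel\"{o}f argument. Your Case~1 is sound and self-contained. But there is a genuine gap in your treatment of non-isolated points, above all in the subcase you yourself call hardest. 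If $x_0\in A\setminus D(X)$ is a $P$-point of $X$ (every $G_\delta$-set containing $x_0$ is a neighbourhood of $x_0$ --- exactly the situation suggested by the paper's Example~1, where $X=\beta T$ for a $P$-space $T$), then \emph{no} countable family of pairwise disjoint clopen sets missing $x_0$ can accumulate at $x_0$: the complement of their union is a $G_\delta$-set containing $x_0$, hence a neighbourhood of $x_0$ disjoint from all of them. So the partition you propose in order to force $f(x_0)=\pm 2$ does not exist, and more generally no prescription of $f$ on countably many isolated points can control the value $f(x_0)$ there. Your Case~2 is repairable (a greedy binary clopen splitting yields the disjoint $V_i$, and $f\equiv p_i$ on $V_i$ is indeed forced by density of $D(X)\cap V_i$), but as written the existence of the $s_i$ and $V_i$ is asserted rather than proved.

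The paper sidesteps this difficulty by never trying to run the diagonal scheme at a non-isolated point: for $z\in A\setminus D(X)$ it uses only Lindel\"{o}fness of $C_{\lambda}(X)$, extracting a countable subcover of the single cover $\{[A,V]\}\cup\{[\{x\},\mathbb{R}\setminus[-\frac{2}{3},\frac{2}{3}]]:x\in D(X)\}$ with $V=(-1,1)\cup(\mathbb{R}\setminus[-4,4])$, and then produces, via $C^*$-embeddedness of countable sets in the basically disconnected $X$, a function $h$ vanishing at the countably many isolated points $x_n$ of the subcover with $h(z)=2\notin V$; in the $P$-point scenario this works precisely because $z\notin\overline{\{x_n:n\in\mathbb{N}\}}$. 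If you want to keep your approach, you need an analogous dichotomy: the set $Y_0$ of all isolated points occurring in your $\mathcal{S}_i$'s is a cozero set, so $\overline{Y_0}$ is clopen by Lemma~\ref{lem2}; if $x_0\notin\overline{Y_0}$, simply set $f\equiv 2$ on the clopen set $X\setminus\overline{Y_0}$ and $f\equiv 0$ on $\overline{Y_0}$, and only when $x_0\in\overline{Y_0}$ is an accumulation argument (now along points of $Y_0$ itself, with values $2-c_{m_0(y)}\to 2$) available. Without this case distinction the construction fails, so the proof as proposed is incomplete.
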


\begin{proof} Suppose that $C_{\lambda}(X)$ is Menger, $\widetilde{\lambda}=\{A\}\bigcup \{\{x\}, x\in D(X)\}$ and
$A\in \lambda$ is an infinite subset of $X$. Then
$C_{\widetilde{\lambda}}(X)$ is Menger, too. Note that if $A$ is
countable and $A\subset D(X)$, then we have a continuous mapping
$g: C_{\widetilde{\lambda}}(X)\mapsto C^*_{p\bigcup
\{\mathbb{N}\}}(\mathbb{N})$. Hence, $C^*_{p\bigcup
\{\mathbb{N}\}}(\mathbb{N})$ is Menger, contrary to Proposition
\ref{pr1}.

  Let $V=(-1,1)\bigcup(\mathbb{R}\setminus [-4,4])$. Consider
$\mathcal{U}=\{[A,V]\}\bigcup \{[x,\mathbb{R}\setminus
[-\frac{2}{3},\frac{2}{3}]]: x\in D(X)\}$. Since $D(X)$ is dense
subset of $C_{\widetilde{\lambda}}(X)$ (Lemma \ref{lem4}),
$\mathcal{U}$ is an open cover of $C_{\widetilde{\lambda}}(X)$
and, hence, there is a countable subcover $\mathcal{U}'\subset
\mathcal{U}$ of $C_{\widetilde{\lambda}}(X)$. Let
$\mathcal{U}'=\{[A,V],[x_1,\mathbb{R}\setminus
[-\frac{2}{3},\frac{2}{3}]], ..., [x_n,\mathbb{R}\setminus
[-\frac{2}{3},\frac{2}{3}]], ...\}$. Let $z\in A\setminus
\bigcup\limits_{n\in \mathbb{N}} \{x_n\}$ (note that either $z\in
A\setminus D(X)$ or $A\subset D(X)$ and $|A|>\aleph_0$). Since
every countable set in a basically disconnected space is
$C^*$-embedded, there is
 $h\in C_{\widetilde{\lambda}}(X)$ such that $h\vert \bigcup\limits_{n\in \mathbb{N}} \{x_n\}=0$ and
 $h(z)=2$. It follows that $h\notin \bigcup \mathcal{U}'$, to
 contradiction. It follows that  $A$ is finite
subset of $D(X)$.

\end{proof}

\begin{theorem}\label{th100} Let $X$ be a Tychonoff space and let $\lambda$ be a $\pi$-network of $X$. Then a space
$C_{\lambda}(X)$ is Menger, if and only if,  $C_{\lambda}(X)$ is
$\sigma$-compact.
\end{theorem}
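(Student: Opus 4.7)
The plan is as follows. The ``if'' direction is immediate from the introductory remark that every $\sigma$-compact space is Menger, so the entire content of the theorem lies in the forward implication. For that direction, the strategy is to collect the preceding lemmas into a verification of the hypotheses of Nokhrin's Theorem \ref{th111}.

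Assume $C_{\lambda}(X)$ is Menger. First, I would read off $X$ is pseudocompact from Lemma \ref{lem1}. Next, Lemma \ref{lem4} gives that $D(X)$ is dense in $X$, and Lemma \ref{lem5} upgrades this to $D(X)$ being $C^*$-embedded in $X$. Finally, Lemma \ref{lem6} yields that every $A \in \lambda$ is a finite subset of $D(X)$, i.e.\ $\lambda \subseteq [D(X)]^{<\omega}\setminus\{\varnothing\}$. At this point three of the four bulleted conditions in Theorem \ref{th111} are already in place.

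The one slightly delicate step is to verify that $\lambda$ ``consists of all finite subsets of $D(X)$'' in the sense of Theorem \ref{th111}. For this I would argue that the two topologies on $C(X)$ coincide. Since $\lambda$ is a $\pi$-network and each $x \in D(X)$ is isolated (so $\{x\}$ is itself open in $X$), the definition of a $\pi$-network forces some nonempty $A \in \lambda$ with $A \subseteq \{x\}$, hence $\{x\} \in \lambda$. Combined with Lemma \ref{lem6} we therefore have
\[
\{\{x\} : x \in D(X)\} \;\subseteq\; \lambda \;\subseteq\; [D(X)]^{<\omega} \setminus \{\varnothing\}.
\]
The outer two families generate the same $\lambda$-open topology on $C(X)$, because a finite intersection $\bigcap_{i=1}^{n}[\{x_i\},U]$ of subbasic neighbourhoods coincides with $[\{x_1,\dots,x_n\},U]$. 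Consequently, $C_{\lambda}(X)$ is homeomorphic to $C_{\lambda'}(X)$ where $\lambda' = [D(X)]^{<\omega}\setminus\{\varnothing\}$, so Theorem \ref{th111} applies to give $\sigma$-compactness of $C_{\lambda}(X)$.

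The main obstacle I anticipate is this last reconciliation step, since Nokhrin's theorem is stated for the specific family of all finite subsets of $D(X)$ rather than for an arbitrary $\pi$-network satisfying the conclusion of Lemma \ref{lem6}; the rest is bookkeeping of the previously established lemmas.
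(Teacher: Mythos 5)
Your proposal is correct and follows essentially the same route as the paper: assemble Lemmas \ref{lem1}, \ref{lem4}, \ref{lem5} and \ref{lem6} to verify the hypotheses of Nokhrin's Theorem \ref{th111}, with the converse being the standard fact that $\sigma$-compact implies Menger. Your extra reconciliation step (showing $\{\{x\}:x\in D(X)\}\subseteq\lambda\subseteq[D(X)]^{<\omega}$ and that all three families generate the same topology) is a careful patch of a point the paper passes over by simply citing Lemmas \ref{lem3} and \ref{lem6} to assert that $\lambda$ ``consists of all finite subsets of $D(X)$.''
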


\begin{proof} By Lemma \ref{lem1}, $X$ is
pseudocompact. By Lemmas \ref{lem3} and \ref{lem6}, the family
$\lambda$ consists of all finite subsets of $D(X)$, where $D(X)$
is an isolated points of $X$. By Lemma \ref{lem5}, $D(X)$ is a
dense $C^*$-embedded set in $X$. It follows that $C_{\lambda}(X)$
is $\sigma$-compact (Theorem \ref{th111}).

\end{proof}

 Various properties between $\sigma$-compactness and Menger are
 investigated in the papers \cite{tall,dtz}. We can summarize the relationships between considered
 notions in (\cite{tall}, see Figure 1),  Theorems \ref{th100} and
 \ref{th111}. Then we have the next

 \begin{theorem} For a Tychonoff space $X$ and  a $\pi$-network $\lambda$ of $X$, the following statements are
equivalent:

\begin{enumerate}

\item $C_{\lambda}(X)$ is $\sigma$-compact;

\item $C_{\lambda}(X)$ is Alster;

\item $(CH)$ $C_{\lambda}(X)$ is productively Lindel$\ddot{o}$f;

\item "TWO wins $M$-game" for $C_{\lambda}(X)$;

\item $C_{\lambda}(X)$ is projectively $\sigma$-compact and
Lindel$\ddot{o}$f;

\item $C_{\lambda}(X)$ is Hurewicz;

\item  $C_{\lambda}(X)$ is Menger;

\item $X$ is a pseudocompact, $D(X)$ is a dense $C^*$-embedded set
in $X$ and family $\lambda$ consists of all finite subsets of
$D(X)$, where $D(X)$ is an isolated points of $X$.

\end{enumerate}

 \end{theorem}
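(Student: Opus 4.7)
The plan is to organize the proof around the already-established chain (1) $\Leftrightarrow$ (7) $\Leftrightarrow$ (8), coming from Theorem \ref{th100} and Theorem \ref{th111}, and to squeeze the remaining properties (2)--(6) between $\sigma$-compactness and the Menger property using the general implications summarized in Figure 1 of \cite{tall}. Once each of the conditions (2)--(6) is shown to be implied by (1) and to imply (7), the equivalence of the whole list follows automatically from Theorem \ref{th100}.

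Concretely, I would first note the standard "downward" implications for an arbitrary Tychonoff space: $\sigma$-compact $\Rightarrow$ Alster, Alster $\stackrel{(CH)}{\Rightarrow}$ productively Lindel\"of, $\sigma$-compact $\Rightarrow$ "TWO wins $M$-game" $\Rightarrow$ Menger, $\sigma$-compact $\Rightarrow$ Hurewicz $\Rightarrow$ Menger, and $\sigma$-compact $\Rightarrow$ projectively $\sigma$-compact. Together with the fact that every Menger space is Lindel\"of (noted in the introduction), this places each of (2), (4), (5), (6) between (1) and (7). For (3), I would use the result (also cited via \cite{tall}) that under CH every productively Lindel\"of space is Menger; this supplies (3) $\Rightarrow$ (7). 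For (5), projectively $\sigma$-compact implies projectively Menger, and combined with Lindel\"of this gives Menger by Ko\v{c}inac's Theorem \ref{koc} (i.e.\ Theorem 2.2 of \cite{koc}), yielding (5) $\Rightarrow$ (7).

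Having done this, the chain of implications is
\[
(1) \Rightarrow (2) \Rightarrow (3) \Rightarrow (7), \qquad (1) \Rightarrow (4) \Rightarrow (7), \qquad (1) \Rightarrow (6) \Rightarrow (7), \qquad (1) \Rightarrow (5) \Rightarrow (7),
\]
so to close every loop it suffices to invoke (7) $\Rightarrow$ (1), which is exactly Theorem \ref{th100}. Finally, Theorem \ref{th111} provides (1) $\Leftrightarrow$ (8), completing the equivalence of all eight conditions.

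The bulk of the argument is bookkeeping rather than new mathematics; the only real "work" has already been done in Theorem \ref{th100} (showing Menger implies $\sigma$-compact for $C_\lambda(X)$) and in Theorem \ref{th111} (Nokhrin's structural characterization of $\sigma$-compactness of $C_\lambda(X)$). The main delicate point in the write-up will be citing the correct form of each black-box implication from \cite{tall} --- in particular the CH hypothesis for (3) and the fact that Menger is needed (not just Lindel\"of) to conclude (5) $\Rightarrow$ (7); once these are stated precisely, the proof reduces to a one-paragraph assembly.
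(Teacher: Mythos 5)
Your proposal is correct and is essentially the paper's own argument: the paper gives no written proof beyond citing the implication diagram in Figure~1 of \cite{tall} together with Theorems \ref{th100} and \ref{th111}, and your assembly (sandwiching (2)--(6) between $\sigma$-compactness and the Menger property, closing the loop with Theorem \ref{th100}, and getting (8) from Theorem \ref{th111}) is exactly the intended bookkeeping. The only cosmetic slip is placing the CH hypothesis on the implication Alster $\Rightarrow$ productively Lindel\"of (which holds in ZFC); CH is needed only for productively Lindel\"of $\Rightarrow$ Menger, which you do state correctly.
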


\section{Projectively Menger space}

According to Tka$\check{c}$uk \cite{tka}, a space $X$ said to be
{\it $b$-discrete} if every countable subset of $X$ is closed
(equivalently, closed and discrete) and $C^*$-embedded in $X$.

\medskip
\begin{lemma}(Lemma 2.1 in \cite{sa})\label{ls} The following are equivalent for a space
$X$:

\begin{enumerate}

\item  $X$ is $b$-discrete;

\item For any disjoint countable subsets $A$ and $B$ in $X$, there
are disjoint zero-sets $Z_A$ and $Z_B$ in $X$ such that $A\subset
Z_A$ and $B\subset Z_B$;

\item For any disjoint countably subsets $A$ and $B$ in $X$ such
that $A$ is closed in $X$, there are disjoint zero-sets $Z_A$ and
$Z_B$ in $X$ such that $A\subset Z_A$ and $B\subset Z_B$.

\end{enumerate}

\end{lemma}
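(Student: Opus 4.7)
The plan is to prove the equivalence by the cycle (1) $\Rightarrow$ (2) $\Rightarrow$ (3) $\Rightarrow$ (1), with (2) $\Rightarrow$ (3) being immediate since (3) is just (2) restricted to the case where $A$ is closed.

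For (1) $\Rightarrow$ (2), let $A$ and $B$ be disjoint countable subsets of $X$. Their union $S = A \cup B$ is countable, hence by $b$-discreteness is closed, discrete, and $C^*$-embedded in $X$. Define $\varphi : S \to \mathbb{R}$ by $\varphi|_A \equiv 0$ and $\varphi|_B \equiv 1$; since $S$ is discrete, $\varphi$ is continuous and bounded, so extends to some $f \in C(X)$. Then $Z_A := f^{-1}(0)$ and $Z_B := (f-1)^{-1}(0)$ are disjoint zero-sets containing $A$ and $B$ respectively.

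The main work is (3) $\Rightarrow$ (1), which I would split into three steps. \emph{Step 1: every countable subset is closed.} If $S$ is countable and $p \in \overline{S} \setminus S$, apply (3) to $A = \{p\}$ (closed since $X$ is Tychonoff) and $B = S$, obtaining disjoint zero-sets $Z_A \ni p$ and $Z_B \supseteq S$; because $Z_B$ is closed this forces $p \in Z_B$, contradicting $Z_A \cap Z_B = \emptyset$. \emph{Step 2: every such closed countable $S$ is discrete.} For any $p \in S$ apply (3) to $A = \{p\}$ and $B = S \setminus \{p\}$ (which is countable); the disjoint zero-sets $Z_A, Z_B$ separate $p$ from $S \setminus \{p\}$, so $p$ is isolated in $S$. \emph{Step 3: every countable $S$ is $C^*$-embedded.} I would invoke Urysohn's extension theorem: it suffices to show that any two completely separated subsets of $S$ are completely separated in $X$. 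Since $S$ is closed and discrete, disjoint subsets $A', B' \subseteq S$ are automatically completely separated in $S$; both are countable, so by Step 1 both are closed in $X$. Applying (3) to $A', B'$ yields disjoint zero-sets of $X$ containing them, which gives the desired complete separation via the standard formula $\tfrac{|g|}{|g|+|h|}$ where $Z(g)$ and $Z(h)$ are the separating zero-sets.

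I expect the only delicate point to be Step 3, where one must remember that Urysohn's extension theorem requires verifying the condition for \emph{every} pair of completely separated subsets of $S$, not only for pairs arising from a single given bounded function. Once discreteness (Step 2) and closedness of subsets (Step 1) are in hand, this reduces directly to hypothesis (3), so no extra construction is needed. The other arrows are essentially bookkeeping with the definitions of zero-sets and $C^*$-embedding.
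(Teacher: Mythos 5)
Your argument is correct: the cycle $(1)\Rightarrow(2)\Rightarrow(3)\Rightarrow(1)$ goes through, and each step (extending the two-valued function off the closed discrete set $S=A\cup B$; the trivial specialization; and the three-step verification of closedness, discreteness, and $C^*$-embedding via Urysohn's extension theorem together with the fact that sets contained in disjoint zero-sets are completely separated) is sound. The paper itself offers no proof of this lemma --- it is quoted verbatim as Lemma 2.1 of Sakai's cited article --- and your proof is the standard one for that result, so there is nothing to flag.
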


\begin{definition} For $A\subset X$, a space $X$ will be called
{\it $b_A$-discrete } if every countable subset of $A$ is closed
in $A$ and $C^*$-embedded in $X$.
\end{definition}

\begin{lemma}\label{lsak} The following are equivalent for a space
$X$ and $A\subset X$:

\begin{enumerate}

\item  $X$ is $b_A$-discrete;

\item For any disjoint countable subsets $D$ and $B$ in $A$, there
are disjoint zero-sets $Z_D$ and $Z_B$ in $X$ such that $D\subset
Z_D$ and $B\subset Z_B$;

\item For any disjoint countably subsets $D$ and $B$ in $A$ such
that $D$ is closed in $A$, there are disjoint zero-sets $Z_A$ and
$Z_B$ in $X$ such that $D\subset Z_D$ and $B\subset Z_B$.

\end{enumerate}

\end{lemma}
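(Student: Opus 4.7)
The plan is to mirror Sakai's proof of Lemma \ref{ls} in the relative setting, by establishing the cycle $(1)\Rightarrow(2)\Rightarrow(3)\Rightarrow(1)$. The implication $(2)\Rightarrow(3)$ is immediate, since (3) is just (2) subject to an additional hypothesis on $D$.

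For $(1)\Rightarrow(2)$, I would take disjoint countable subsets $D, B \subset A$ and put $E = D \cup B$. By $b_A$-discreteness, $D$, $B$, and $E$ are all closed in $A$, so in particular $D$ and $B$ are closed in $E$. The map $g : E \to [0,1]$ defined by $g|_D \equiv 0$ and $g|_B \equiv 1$ is therefore continuous; since $E$ is $C^*$-embedded in $X$, extend $g$ to some $\widetilde{g} \in C^*(X)$. Then $Z_D = \widetilde{g}^{-1}(0)$ and $Z_B = \widetilde{g}^{-1}(1)$ are the desired disjoint zero-sets of $X$ containing $D$ and $B$.

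For $(3)\Rightarrow(1)$, fix a countable $E \subset A$. To prove that $E$ is closed in $A$, I would pick $x \in A \setminus E$ and apply (3) with $D = \{x\}$ (closed in $A$ since $X$ is Tychonoff) and $B = E$: the resulting zero-set $Z_E$ contains $E$ and misses $x$, so $x \notin \overline{E}^X$ and hence $x \notin \overline{E}^A$. To prove that $E$ is $C^*$-embedded in $X$, I would invoke the Urysohn extension theorem (\cite{gj}): it suffices to verify that any two subsets of $E$ completely separated in $E$ are completely separated in $X$. Given such $E_1, E_2 \subset E$, take disjoint zero-sets $Z_1, Z_2$ of $E$ containing them; since $Z_1$ is closed in $E$ and $E$ is closed in $A$ by the previous step, $Z_1$ is closed in $A$. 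Applying (3) with $D = Z_1$ and $B = Z_2$ now produces disjoint zero-sets of $X$ containing $E_1$ and $E_2$, as required.

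The main subtle point is the $C^*$-embedding half of $(3)\Rightarrow(1)$: one must route the notion of complete separation in $E$ through zero-sets of $E$ rather than working directly with $E_1$ and $E_2$, because condition (3) requires one of the two countable sets to be closed in $A$, whereas sets completely separated in $E$ need not themselves be closed. Apart from this technicality, the argument is a faithful relative version of Sakai's proof, with the only ingredient beyond Sakai's being the observation that the first half of $(3)\Rightarrow(1)$ supplies the closed-in-$A$ property that lets the second half invoke (3) on zero-sets sitting inside $E$.
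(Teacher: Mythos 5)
Your proof is correct and is exactly the intended argument: the paper states Lemma \ref{lsak} without proof, as the relative analogue of Sakai's Lemma 2.1 (Lemma \ref{ls}), and your relativization — including the key observation that the ``closed in $A$'' half of $(3)\Rightarrow(1)$ must be established first so that zero-sets of $E$ are closed in $A$ and condition (3) can be applied to them — is complete and sound.
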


Similarly to the proof of implication ($C_p(X,\mathbb{I})$ is
projectively Menger $\Rightarrow$ $X$ is $b$-discrete) of Theorem
2.4 in \cite{sa}, we claim the next

\begin{lemma}
\label{lm2}

 Let $C_{\lambda}(X)$ be a projectively Menger space, then $X$ is  $b_{A}$-discrete where $A=\bigcup \lambda$.
\end{lemma}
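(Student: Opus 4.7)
The plan is to prove the contrapositive by adapting Sakai's argument for Theorem~2.4 of \cite{sa} from the pointwise to the set-open setting.

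Suppose $X$ is not $b_{A}$-discrete. By Lemma~\ref{lsak} one may fix disjoint countable subsets $D=\{d_{n}:n\in\mathbb{N}\}$ and $B=\{b_{n}:n\in\mathbb{N}\}$ of $A=\bigcup\lambda$ such that no pair of disjoint zero-sets in $X$ separates $D$ from $B$. Since $D\cup B\subseteq A$, for each $n$ one picks $F_{n},G_{n}\in\lambda$ with $d_{n}\in F_{n}$ and $b_{n}\in G_{n}$; these sets will play the role of Sakai's pointwise evaluations.

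The core of the plan is to build a continuous map $\Phi\colon C_{\lambda}(X)\to\mathbb{R}^{\omega}$ whose image is dominating, so that Theorem~\ref{koc} applied to this second-countable image of $C_{\lambda}(X)$ yields a contradiction with the projective Menger hypothesis. The dominating family is extracted from the non-separability of $D$ and $B$: for every $\phi\in\omega^{\omega}$ one constructs $f\in C(X)$ whose readouts on the sets $F_{n},G_{n}$ grow beyond $\phi$ coordinate-wise. The driving observation is that a uniform bound on such readouts across $n$ would furnish a bounded continuous function separating $D$ from $B$, contradicting the choice of $D,B$ via Lemma~\ref{lsak}.

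\textbf{Main obstacle.} The delicate step is that, unlike in the $C_{p}$ setting where pointwise evaluation is automatically continuous, evaluation at an individual $d_{n}\in F_{n}$ is not $\lambda$-open-continuous in general—one only has the one-sided control $[F_{n},V]\subseteq\{f:f(d_{n})\in V\}$. The proof must therefore route Sakai's construction through $\lambda$-open-continuous proxies attached to the sets $F_{n}$ and $G_{n}$: either by restricting attention to a closed subspace of $C_{\lambda}(X)$ (which inherits projective Menger) on which pointwise values at $d_{n}, b_{n}$ are detected by the $\lambda$-open basic sets $[F_{n},V],[G_{n},V]$, or by replacing pointwise evaluations with the semicontinuous functionals $\min_{F_{n}}f$ and $\max_{G_{n}}f$ and verifying that the non-separation of $D$ and $B$ still yields a dominating family through these proxies. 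Executing this substitution while preserving the construction of the dominating family is the technical heart of the adaptation, and is where the parallel with Sakai's pointwise argument has to be recovered by hand.
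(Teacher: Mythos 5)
There is a genuine gap: your argument is a plan whose central step is never carried out, and you say so yourself (``the technical heart of the adaptation\dots has to be recovered by hand''). Concretely, two things are missing. First, the construction of the dominating family: non-separability of $D$ and $B$ is a purely negative hypothesis (no continuous function is $0$ on $D$ and bounded away from $0$ on $B$), and you do not explain how, for an arbitrary $\phi\in\omega^{\omega}$, it produces a function $f$ whose readouts on $F_n,G_n$ eventually dominate $\phi$; the ``driving observation'' you state does not supply such witnesses. Second, the obstacle you correctly identify is not overcome: the proxies $\min_{F_n}f$ and $\max_{G_n}f$ are only semicontinuous on $C_{\lambda}(X)$, so the map $\Phi$ you need is not continuous, and the projective Menger hypothesis (which only constrains \emph{continuous} second countable images, via Theorem~\ref{koc}) cannot be brought to bear. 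Neither of the two escape routes you sketch (a closed subspace detecting pointwise values, or the semicontinuous functionals) is developed to the point where the contradiction actually appears.

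The paper avoids all of this by arguing directly rather than by contraposition. It verifies condition (3) of Lemma~\ref{lsak} head-on: for disjoint countable $D$ and $B=\{b_n\}$ in $A$ it forms the zero-sets $Z_{n,m}=\{f\in C_{\lambda}(X): f(D)=\{0\},\ f(B_m)\subseteq[2^{-n},1]\}$, applies the zero-set/cozero characterization of projective Menger (Theorem~6 of \cite{bcm}) to the families $\{Z_{n,m}\}_m$, and refutes the resulting diagonal selection $\varphi$ with the explicit function $g=\sum_{j}2^{-j}g_j$, where $g_j(D)=\{0\}$ and $g_j(B_{\varphi(j)})=\{1\}$. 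The surviving element $h\in\bigcap_m Z_{n,m}$ then yields the separating zero-sets $h^{-1}(0)$ and $h^{-1}([2^{-n},1])$. This route never needs a continuous evaluation map into $\mathbb{R}^{\omega}$ or a dominating image, which is precisely the machinery your plan cannot assemble; if you want to salvage your approach you would have to either prove the continuity of your proxies on a suitable closed subspace or switch to the paper's direct use of the \cite{bcm} characterization.
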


\begin{proof} Let $C_{\lambda}(X)$ be a projectively
Menger. We show the statement (3) in Lemma \ref{lsak}. Let $D$ and
$B$ be a disjoint countable subsets in $A$ such that $D$ is closed
in $A$. Let $B=\{b_n: n\in \mathbb{N}\}$, and let
$B_n=\{b_1,...,b_n\}$.

For each $n,m\in \mathbb{N}$, we put $Z_{n,m}=\{f\in
C_{\lambda}(X): f(D)=\{0\}$ and $f(B_m)\subset [\frac{1}{2^n},
1]\}$. Since $D$ and $B_m$ are countable and $\lambda$ is a
$\pi$-network of $X$, each $Z_{n,m}$ is a zero-set in
$C_{\lambda}(X)$. Assume that $\bigcap \{Z_{n,m}: m\in
\mathbb{N}\}=\emptyset$ for all $n\in \mathbb{N}$. Using the
projective Menger property of $C_{\lambda}(X)$, Theorem 6 in
\cite{bcm}, we can take some $\varphi\in \mathbb{N}^{\mathbb{N}}$
such that $\bigcap \{Z_{n,\varphi(n)} : n\in
\mathbb{N}\}=\emptyset$. For each $n\in \mathbb{N}$, take any
$g_n\in C_{\lambda}(X)$ satisfying $g_n(D)=\{0\}$ and
$g_n(B_{\varphi(n)}=\{1\}$. Let $g=\sum\limits_{j=1}^{\infty}
2^{-j}g_j$.  Then, $g\in C_{\lambda}(X)$ and $g(D)\equiv 0$. Fix
any $n\in\mathbb{N}$, $1\leq k \leq \varphi(m)$. Then we have

$g(b_k)=\sum^{\infty}_{j=1} 2^{-j}g_j(b_k)\geq
2^{-n}g_n(b_k)=2^{-n}$.

Hence, $g\in \bigcap \{Z_{n, \varphi(n)} : n\in \mathbb{N}\}$.
This is a contradiction. Thus, there is some $n\in \mathbb{N}$
such that $\bigcap \{Z_{n,m} : m\in \mathbb{N}\}\neq\emptyset$.
Let $h\in \bigcap \{Z_{n,m} : m\in \mathbb{N}\}$. Then $D\subset
Z_A=h^{-1}(0)$ and $B\subset Z_B=h^{-1}([\frac{1}{2^n},1])$.

\end{proof}

\begin{theorem}\label{th200} Let $X$ be a Tychonoff space and let $Y$ be a dense subset of $X$.
 Then the following statements are equivalent:

\begin{enumerate}

\item $C_{p}(Y\vert X)$ is projectively Menger;

\item $C_{p}(Y\vert X)$ is $\sigma$-bounded;

\item $C_{p}(Y\vert X)$ is $\sigma$-pseudocompact;

\item $X$ is pseudocompact and $b_{Y}$-discrete.

\end{enumerate}

\end{theorem}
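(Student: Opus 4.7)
The plan is to prove the cycle $(3)\Rightarrow(2)\Rightarrow(1)\Rightarrow(4)\Rightarrow(3)$. The step $(3)\Rightarrow(2)$ is immediate: a pseudocompact subspace $P$ of a Tychonoff space $T$ is bounded in $T$, since any $F\in C(T)$ restricts to a continuous, hence bounded, function on $P$; a countable union of pseudocompacts is therefore $\sigma$-bounded. The step $(2)\Rightarrow(1)$ is Proposition~1.1 of~\cite{arh}, already recalled in the introduction.

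For $(1)\Rightarrow(4)$, I would identify $C_{p}(Y\vert X)$ with $C_{\lambda}(X)$ where $\lambda$ is the family of finite subsets of $Y$ (a $\pi$-network of $X$ since $Y$ is dense), and apply Lemma~\ref{lm2} with $A=\bigcup\lambda=Y$ to conclude that $X$ is $b_{Y}$-discrete. For pseudocompactness I argue by contradiction: if some $f\in C(X)$ is unbounded, after rescaling I may assume $\mathbb{N}\subset f(X)$, and by density of $Y$ I pick $y_n\in Y\cap f^{-1}((n-\frac{1}{3},n+\frac{1}{3}))$ for each $n\in\mathbb{N}$. The evaluation map $\Phi\colon C_{p}(Y\vert X)\to\mathbb{R}^{\omega}$, $\Phi(g)=(g(y_n))_{n}$, is continuous. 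Given an arbitrary $\alpha\in\mathbb{N}^{\mathbb{N}}$, a piecewise-linear interpolation produces a continuous $\phi_{\alpha}\colon\mathbb{R}\to\mathbb{R}$ with $\phi_{\alpha}(t)\ge\alpha(n)$ for every $t\in[n-\frac{1}{3},n+\frac{1}{3}]$; then $g_{\alpha}:=\phi_{\alpha}\circ f\in C(X)$ satisfies $g_{\alpha}(y_n)\ge\alpha(n)$ for all $n$. Hence $\Phi(C_{p}(Y\vert X))$ is dominating in $\mathbb{R}^{\omega}$, and since this image is second countable, the Hurewicz characterization (Theorem~\ref{hur}) forces it to be non-Menger, contradicting projective Menger of $C_{p}(Y\vert X)$.

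For $(4)\Rightarrow(3)$, since $X$ is pseudocompact every $f\in C(X)$ is bounded and, by density of $Y$ and continuity, $\sup_{X}|f|=\sup_{Y}|f|$. Thus $C_{p}(Y\vert X)=\bigcup_{n\in\mathbb{N}}C_n$ where $C_n:=\{g\in C_{p}(Y\vert X):\sup_{Y}|g|\le n\}$. I claim each $C_n\subset[-n,n]^{Y}$ is pseudocompact: by Proposition~\ref{pr13} (after rescaling $[-n,n]\cong\mathbb{I}$) it suffices to show that $C_n$ projects onto every countable subcube $[-n,n]^{B}$, $B\subset Y$. Given any $\alpha\colon B\to[-n,n]$, the $b_{Y}$-discreteness of $X$ guarantees that $B$ is $C^{*}$-embedded in $X$, so $\alpha$ extends to some $\widetilde{\alpha}\in C^{*}(X)$; composing with the retraction $r\colon\mathbb{R}\to[-n,n]$, $r(t)=\max(-n,\min(t,n))$, gives $h:=r\circ\widetilde{\alpha}\in C(X)$ with $\|h\|_{\infty}\le n$ and $h|_{B}=\alpha$. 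Then $h|_{Y}\in C_n$ projects onto $\alpha$, closing the cycle.

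The main obstacle is the pseudocompactness half of $(1)\Rightarrow(4)$: $b_{Y}$-discreteness only delivers $C^{*}$-embedding, so one cannot directly realize an arbitrary $\alpha\in\mathbb{N}^{\mathbb{N}}$ as values of a continuous function at $\{y_n\}$. The fix is to route through the hypothetical unbounded $f$: the composition $\phi_{\alpha}\circ f$ uses $f$ as a carrier that turns the bounded-extension constraint into unlimited growth of $g_{\alpha}(y_n)$, producing the dominating image needed to contradict projective Menger.
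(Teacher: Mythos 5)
Your proposal is correct and follows essentially the same route as the paper: the same implication cycle, Lemma~\ref{lm2} for $b_Y$-discreteness, a dominating image in $\mathbb{R}^{\omega}$ contradicting the Hurewicz/Ko\v{c}inac characterization for pseudocompactness of $X$, and Proposition~\ref{pr13} applied to the bounded pieces $C_n$ for $(4)\Rightarrow(3)$. The only difference is cosmetic: where the paper invokes the fact (3L(1) in \cite{gj}) that $D=\{a_n\}$ is a $C$-embedded copy of $\mathbb{N}$ to get the image equal to all of $\mathbb{R}^{\omega}$, you construct the dominating functions explicitly as $\phi_{\alpha}\circ f$, which is an equally valid (and slightly more self-contained) way to reach the same contradiction.
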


\begin{proof}

Note that  $C_{p}(Y\vert X)$ is homeomorphic to $C_{\lambda}(X)$
for $\lambda=[Y]^{<\omega}$.

$(1)\Rightarrow (4)$. By Lemma \ref{lm2}, $X$ is $b_{Y}$-discrete.
Assume that $X$ is not pseudocompact and  $f\in C(X)$ is not
bounded function. Without loss of generality we can assume that
$\mathbb{N}\subset f(X)$. For each $n\in \mathbb{N}$ we choose
$a_n\in Y$ such that $a_n\in  f^{-1}((n-\frac{1}{3},
n+\frac{1}{3}))$.  Note that $D=\{a_n : n\in \mathbb{N}\}$ is a
$C$-embedded copy of $\mathbb{N}$ (3L (1) in \cite{gj}). So we
have a continuous mapping $F: C_p(Y\vert X) \mapsto \mathbb{R}^D$
the Menger space $C_p(Y\vert X)$ onto $\mathbb{R}^D$. But
$F(C_p(Y\vert X))=\mathbb{R}^D=\mathbb{R}^{\mathbb{\omega}}$ is
dominating, contrary to the Theorem \ref{koc}.

 $(4)\Rightarrow (3)$.  Since $C_{p}(Y\vert X,\mathbb{I})$ is a dense subset of
$\mathbb{I}^{Y}$ and $X$ is $b_{Y}$-discrete, by Proposition
\ref{pr13}, $C_{p}(Y\vert X,\mathbb{I})$ is pseudocompact. Hence,
$C_{p}(Y\vert X)$ is $\sigma$-pseudocompact.

Note that every $\sigma$-pseudocompact space is $\sigma$-bounded,
and every $\sigma$-bounded space is projectively Menger
 (Proposition 1.1 in \cite{arh}).
\end{proof}

\section{Examples}

Using Theorem \ref{th100} and Theorem \ref{th200}, we can
construct example of projective Menger topological group
$C_{\lambda}(X)$ such that it is not Menger.

Note that if $\lambda=[\bigcup \lambda]^{<\omega}$, then
$C_{\lambda}(X)$ is a topological group (locally convex
topological vector space, topological algebra) (\cite{os},
\cite{os1}).

\begin{example} (Example 1 in \cite{ospy})
Let $T$ be a $P$-space without isolated points, $X=\beta(T)$ and
let $\lambda$ be a family of all finite subsets of $T$. Then
$C_{\lambda}(X)$ is $\sigma$-countably compact (Theorem 1.2 in
\cite{ospy}), hence, the topological group $C_{\lambda}(X)$ is
projective Menger. But the space $X$ does not contain isolated
points, hence, $C_{\lambda}(X)$ is not Menger.
\end{example}

\begin{example}(Example 2 in \cite{ospy}) Let $D$ be an uncountable discrete space and $\lambda=D^{<\omega}$. Consider
$F=\beta(D)\setminus\bigcup\{\overline{S}: S\subset
D,\,\mbox{and}\, S\, \, \, \mbox{countable} \}$. Denote by $b(D)$
a quotient space obtained from $\beta(D)$ by identifying the set
$F$ with the point $\{F\}$.  Then  the topological group
$C_{\lambda}(b(D))$ is projective Menger ($\sigma$-countably
compact), but is not Menger.

\end{example}

\begin{example}(\cite{sha}) D.B.Shahmatov has constructed for an
arbitrary cardinal $\tau\geq 2^{\aleph_0}$ an everywhere dense
pseudocompact space $X_{\tau}$ in $\mathbb{I}^{\tau}$ such that
$X_{\tau}$ is a $b$-discrete. Hence, the topological group
$C_p(X_{\tau})$ is projective Menger ($\sigma$-pseudocompact and
is not $\sigma$-countably compact), but is not Menger for an
arbitrary cardinal $\tau\geq 2^{\aleph_0}$.

\end{example}

\begin{remark} By Theorems \ref{th111} and \ref{th100}, if $X$
is compact, $\lambda$ is a $\pi$-network of $X$ and
$C_{\lambda}(X)$ is Menger, then
 $X$ is homeomorphic to $\beta(D)$, where $\beta(D)$ is  Stone-$\check{C}$ech compactification
of a discrete space $D$, and  $\lambda=[D]^{<\omega}$.
\end{remark}
\medskip

\bibliographystyle{model1a-num-names}
\bibliography{<your-bib-database>}







\end{document}